\providecommand{\U}[1]{\protect\rule{.1in}{.1in}}
\providecommand{\U}[1]{\protect \rule{.1in}{.1in}}
\providecommand{\U}[1]{\protect \rule{.1in}{.1in}}
\newtheorem{theorem}{Theorem}[section]
\newtheorem{lemma}{Lemma}[section]
\newtheorem{proposition}{Proposition}[section]
\newtheorem{corollary}{Corollary}[section]
\newtheorem{definition}{Definition}[section]
\numberwithin{equation}{section}
\theoremstyle{remark}
\newtheorem{remark}{Remark}[section]
\numberwithin{equation}{section}
\begin{document}
\title[Li-Yau gradient estimate for sum of squares of vector fields]{On Li-Yau gradient estimate for sum of squares of vector fields up to higher step}
\author{Der-Chen Chang}
\address{Department of Mathematics, Georgetown University, Washington DC 20057-0001, U.S.A.}
\email{chang@georgetown.edu }
\author{Shu-Cheng Chang}
\address{Department of Mathematics and Taida Institute for Mathematical Sciences
(TIMS), National Taiwan University, Taipei 10617, Taiwan, R.O.C}
\email{scchang@math.ntu.edu.tw }
\author{Chien Lin}
\address{Department of Mathematics, National Tsing Hua University, Hsinchu 30013,
Taiwan, R.O.C}
\email{r97221009@ntu.edu.tw}
\thanks{The first author is partially supported by an NSF grant DMS-1408839 and Hong
Kong RGC competitive earmarked research grant $\#$601410. The second and the
third author are partially supported by the MOST of Taiwan}
\subjclass{Primary 32V05, 32V20; Secondary 53C56}
\keywords{CR Li-Yau gradient estimate, Sum of squares of vector fields, H\"{o}rmander's
condition, curvature-dimension inequality, CR Harnack inequality, CR heat kernel.}

\begin{abstract}
In this paper, we generalize the Cao-Yau's gradient estimate for the sum of
squares of vector fields up to higher step under assumption of the generalized
curvature-dimension inequality. With its applications, by deriving a
curvature-dimension inequality, we are able to obtain the Li-Yau gradient
estimate for the CR heat equation in a closed pseudohermitian manifold of
nonvanishing torsion tensors. As consequences, we obtain the Harnack
inequality and upper bound estimate for the CR heat kernel.

\end{abstract}
\maketitle

\section{Introduction}

One of the goals for differential geometry and geometric analysis is to
understand and classify the singularity models of a nonlinear geometric
evolution equation, and to connect it to the existence problem of geometric
structures on manifolds. For instance in 1982, R. Hamilton (\cite{h3})
introduced the Ricci flow. Then by studying the singularity models (\cite{h2},
\cite{pe1}, \cite{pe2}, \cite{pe3}) of Ricci flow, R. Hamilton and G. Perelman
solved the Thurston geometrization conjecture and Poincare conjecture for a
closed $3$-manifold in 2002.

On the other hand, in the seminal paper of P. Li and S.-T. Yau (\cite{ly})
established the parabolic Li-Yau gradient estimate and Harnack inequality for
the positive solution of heat equation
\[%
\begin{array}
[c]{c}%
(\Delta-\frac{\partial}{\partial t})u\left(  x,t\right)  =0
\end{array}
\]
in a complete Riemannian manifold with nonnegative Ricci curvature. Here
$\Delta$ is the time-independent Laplacian operator. Later, R. S. Hamilton (
\cite{h1}) obtained the so-called Li-Yau-Hamilton inequality for the Ricci
flow in a complete Riemannian manifold with a bounded and nonnegative
curvature operator. Recently, G. Perelman (\cite{pe1}) derived the remarkable
entropy formula which is important in the study of the singularity models of
Ricci flow. The derivation of the entropy formula resembles the Li-Yau
gradient estimate for the heat equation. Since then, there were many
additional works in this direction which cover various different geometric
evolution equations such as the mean curvature flow ( \cite{h4}), the
K\"{a}hler-Ricci flow (\cite{ca}), the Yamabe flow (\cite{ch} ), etc.

In the paper of \cite{ckw}, following this direction, we propose to study the
most important geometrization problem of closed CR $3$-manifolds via the CR
torsion flow (\ref{0}). More precisely, let us recall that a strictly
pseudoconvex CR structure on a pseudohermitian $3$-manifold $(M,J,\theta)$ is
given by a cooriented plane field $\ker\theta$, where $\theta$ is a contact
form, together with a compatible complex structure $J$. Given this data, there
is a natural connection, the so-called Tanaka-Webster connection or
pseudohermitian connection. We denote the torsion of this connection by
$A_{J,\theta}$, and the Webster curvature by $W$. We consider the torsion
flow
\begin{equation}
\left\{
\begin{array}
[c]{l}%
\frac{\partial J}{\partial t}=2A_{J,\theta},\\
\frac{\partial\theta}{\partial t}=-2W\theta,
\end{array}
\right. \label{0}%
\end{equation}
on $(M,J,\theta)\times\lbrack0,T).$ It is the negative gradient flow of CR
Einstein-Hilbert functional. \ Along this direction with the torsion flow
(\ref{0}), we have established the CR Li-Yau gradient estimate (\cite{ckl})
and the Li-Yau-Hamilton inequality (\cite{cftw}, \cite{ccf}) for the positive
solution of CR heat equation%

\begin{equation}%
\begin{array}
[c]{c}%
(\Delta_{b}-\frac{\partial}{\partial t})u\left(  x,t\right)  =0
\end{array}
\label{heat eq}%
\end{equation}
in a closed pseudohermitian $(2n+1)$-manifold with nonnegative pseudohermitian
Ricci curvature and vanishing torsion tensors (see next section for
definition). Here $\Delta_{b}$ is the time-independent sub-Laplacian operator.
One of our goals in this paper is to find the CR Li-Yau gradient estimate in a
closed pseudohermitian $(2n+1)$-manifold with nonvanishing torsion tensors.

Let us start with a more general setup for the Li-Yau gradient estimate in a
closed manifold with a positive measure and an operator
\begin{equation}
L=\sum_{j=1}^{d}e_{j}^{2}\text{ }\label{2011c}%
\end{equation}
with respect to the sum of squares of vector fields $e_{1},\ e_{2}%
,\ ...,e_{d}$ which satisfies H\"{o}rmander's condition (\cite{h}). More
precisely, the vector fields $e_{1},\ e_{2},\ ...,e_{d}$ together with their
commutators $Y_{1},...,Y_{h}$ up to finite order span the tangent bundle at
every point of $M$ with $d+h=\dim M.$ It is to say that the commutators of
$e_{1},\ e_{2},\ ...,e_{d}$ of order $r$ ( or called step $r$ as well) can be
expressed as linear combinations of $e_{1},\ e_{2},\ ...,e_{d}$ and their
commutators up to the order $r-1.$ The very first paper of H.-D. Cao and S.-T.
Yau (\cite{cy}) follows this line, and considers the heat equation
\begin{equation}
(L-\frac{\partial}{\partial t})u\left(  x,t\right)  =0.\label{2011b}%
\end{equation}
They derived the gradient estimate of sum of squares of vector fields of step
two ($r=2$) in a closed manifold with a positive measure.

In this paper, with the help of a generalized curvature-dimension inequality
explained below, we are able to obtain the Li-Yau gradient estimate for the CR
heat equation in a closed pseudohermitian manifold of the nonvanishing torsion
tensor. As consequences, we obtain the Harnack inequality and upper bound
estimate for the heat kernel. With the same mentality, we generalize the
Cao-Yau's gradient estimate for the sum of squares of vector fields up to
order three and higher under assumption of a generalized curvature-dimension inequality.

One of the key steps in Li-Yau's method for the proof of gradient estimates is
the Bochner formula involving the (Riemannian) Ricci curvature tensor. Bakry
and Emery (\cite{be}) pioneered the approach to generalizing curvature in the
context of gradient estimates by using curvature-dimension inequalities. In
the CR analogue of the Li-Yau gradient estimate (\cite{ckl}), the CR Bochner
formula (\cite{g}) is
\begin{equation}%
\begin{array}
[c]{ccl}%
\frac{1}{2}\Delta_{b}\left\vert \nabla_{b}f\right\vert ^{2} & = & \left\vert
Hess(f)\right\vert ^{2}+\left\langle \nabla_{b}f,\nabla_{b}(\Delta
_{b}f)\right\rangle +2\left\langle J\nabla_{b}f,\nabla_{b}f_{0}\right\rangle
\\
&  & +(2Ric-\left(  n-2\right)  Tor)(\left(  \nabla_{b}f\right)  _{C},\left(
\nabla_{b}f\right)  _{C}),
\end{array}
\label{2015}%
\end{equation}
which involves a term $\left\langle J\nabla_{b}f,\text{ }\nabla_{b}%
f_{0}\right\rangle $ that has no analogue in the Riemannian case. Here
$f_{0}:=\mathbf{T}\varphi$ and $T$ is the characteristic vector field. In
order to deal with the extra term $\left\langle J\nabla_{b}f,\text{ }%
\nabla_{b}f_{0}\right\rangle $ in case of vanishing torsion tensors, based on
the CR Bochner formula (\ref{2015}), we can show the so-called
curvature-dimension inequality (see Lemma \ref{l1}):
\begin{equation}
\Gamma_{2}(f,f)+\nu\Gamma_{2}^{Z}(f,f)\geq\frac{2}{n}\left\vert \Delta
_{b}f\right\vert ^{2}+\left(  -2k-\frac{8}{\nu}\right)  \left\vert \nabla
_{b}f\right\vert ^{2}+2n\left\vert f_{0}\right\vert ^{2}\label{2016B}%
\end{equation}
for any smooth function $f\in C^{\infty}(M)$ and $\nu>0$ and the
pseudohermitian Ricci curvature bounded below by $-k$. Here
\[
\Gamma_{2}^{Z}(f,\ f):=2\left\vert \nabla_{b}f_{0}\right\vert ^{2}%
\]
and
\[
\Gamma_{2}(f,\ f):=4\left\vert Hess(f)\right\vert ^{2}+8Ric(\left(  \nabla
_{b}f\right)  _{C},\left(  \nabla_{b}f\right)  _{C})+8\left\langle J\nabla
_{b}f,\nabla_{b}f_{0}\right\rangle .
\]

Before we introduce the generalized curvature-dimension inequality
(\ref{2016A}) which was first introduced by Baudoin and Garofalo (\cite{bg})
in the content of sub-Riemannian geometry, it is useful to compare Cao-Yau's
notations with pseudohermitian geometry.

Let $J$ be a CR structure compatible with the contact bundle $\xi=\ker\theta$
and $\mathbf{T}$ be the characteristic vector field of the contact form
$\theta$ in a closed pseudohermitian $(2n+1)$-manifold $(M,J,\theta)$ . The CR
structure $J$ decomposes $\mathbf{C}\otimes\xi$ into the direct sum of
$T_{1,0}$ and $T_{0,1}$ which are eigenspaces of $J$ with respect to $i$ and
$-i$, respectively.\ By choosing a frame $\left\{  \mathbf{T},Z_{j}{}%
_{i},Z_{\overline{j}}\right\}  $ of $TM\otimes%
\mathbb{C}
$ \ with respect to the Levi form such that
\[
J(Z_{j})=iZ_{j}\ \text{\ \textrm{and}\ }J(Z_{\overline{j}})=-iZ_{\overline{j}%
},
\]
then $Y_{1}$ will be the characteristic vector field $\mathbf{T}$ with
$\alpha=1$, $d=2n$ and
\[
Z_{j}=\frac{1}{2}(e_{j}-ie_{\widetilde{j}})\text{ \textrm{and}}%
\ \ Z_{\overline{j}}=\frac{1}{2}(e_{j}+ie_{\widetilde{j}})
\]
with $\widetilde{j}=n+j,\ j=1,...n.$ The operator that we are interested in
this paper will be
\[
L=\sum_{j=1}^{n}(e_{j}{}^{2}+e_{\widetilde{j}}{}^{2})=2\text{\ }\Delta_{b}{.}%
\]

\begin{definition}
Let $M$ be a smooth connected manifold with a positive measure and vector
fields $\{e_{i},Y_{\alpha}\}_{i\in I_{d},\alpha\in\Lambda}$ spanning the
tangent space $TM$. For $\rho_{1}\in%
\mathbb{R}
,\ \rho_{2}>0,\ \kappa\geq0,\ m>0,$ we say that $M$ satisfies the generalized
curvature-dimension inequality $CD(\rho_{1},\rho_{2},\kappa,m)$ if
\begin{equation}
\frac{1}{m}(Lf)^{2}+(\rho_{1}-\frac{\kappa}{\nu})\Gamma(f,\ f)+\rho_{2}%
\Gamma^{Z}(f,\ f)\leq\Gamma_{2}(f,\ f)+\nu\Gamma_{2}^{Z}(f,\ f)\label{2016A}%
\end{equation}
for any smooth function $f\in C^{\infty}(M)$ and $\nu>0.$ Here
\[%
\begin{array}
[c]{ccl}%
\Gamma(f,\ f) & := &
{\displaystyle\sum\limits_{j\in I_{d}}}
\left\vert e_{j}f\right\vert ^{2},\\
\Gamma^{Z}(f,\ f) & := &
{\displaystyle\sum\limits_{\alpha\in I_{h}}}
\left\vert Y_{\alpha}f\right\vert ^{2},\\
\Gamma_{2}(f,\ f) & := & \frac{1}{2}[L(\Gamma(f,\ f))-2%
{\displaystyle\sum\limits_{j\in I_{d}}}
(e_{j}f)(e_{j}Lf)],\\
\Gamma_{2}^{Z}(f,\ f) & := & \frac{1}{2}[L(\Gamma^{Z}(f,\ f))-2%
{\displaystyle\sum\limits_{\alpha\in I_{h}}}
(Y_{\alpha}f)(Y_{\alpha}Lf)].
\end{array}
\]
Note that we also have
\[
\Gamma_{2}(f,\ f)=%
{\displaystyle\sum\limits_{i,j\in I_{d}}}
\left\vert e_{i}e_{j}f\right\vert ^{2}+%
{\displaystyle\sum\limits_{j\in I_{d}}}
(e_{j}f)([L,e_{j}]f)
\]
and
\[
\Gamma_{2}^{Z}(f,\ f)=%
{\displaystyle\sum\limits_{i\in I_{d},\alpha\in I_{h}}}
\left\vert e_{i}Y_{\alpha}f\right\vert ^{2}+%
{\displaystyle\sum\limits_{\alpha\in I_{h}}}
(Y_{\alpha}f)([L,Y_{\alpha}]f).
\]

\end{definition}

In Lemma \ref{l2}, we will derive a curvature-dimension inequality
(\ref{2016A}) in a closed pseudohermitian manifold of the nonvanishing torsion
tensor. As a result, we are able to obtain the following CR Li-Yau gradient
estimate which is served as a generalization of the CR Li-Yau gradient
estimate in a closed pseudohermitian $(2n+1)$-manifold with nonnegative
pseudohermitian Ricci curvature and vanishing torsion as in \cite{ckl},
\cite{ckl1} and \cite{bg}.

\begin{theorem}
\label{t1} Let $(M,J,\theta)$ be a closed pseudohermitian $\left(
2n+1\right)  $-manifold with
\[
\left(  2Ric-\left(  n-2\right)  Tor\right)  \left(  Z,Z\right)
\geq-k\left\langle Z,Z\right\rangle
\]
and%

\[
\max_{i,j\in I_{n}}\left\vert A_{ij}\right\vert \leq\overline{A}%
,\ \ \ \max_{i,j\in I_{n}}\left\vert A_{ij,\overline{i}}\right\vert
\leq\overline{B}%
\]
for $Z\in\Gamma\left(  T_{1,0}M\right)  $, $k\geq0$ and $\overline
{A},\ \overline{B}$ as positive constants. Suppose that $u\left(  x,t\right)
$ is the positive solution of (\ref{heat eq}) on $M\times\left[  0,\text{
}\infty\right)  .$ Then there exist $\delta_{0}=\delta_{0}(n,k,\overline
{A},\overline{B})>>1$ such that $f\left(  x,t\right)  =\ln u\left(
x,t\right)  $ satisfies the following gradient estimate
\begin{equation}
\left\vert \nabla_{b}f\right\vert ^{2}-\delta f_{t}<\frac{C_{1}}{t}%
+C_{2}\label{2}%
\end{equation}
for $\delta\geq\delta_{0}$ and
\[%
\begin{array}
[c]{ccl}%
C_{1} & = & \frac{1}{2}\max\left\{  n\left(  n+1\right)  \delta^{2}%
+\frac{8\sqrt{3}\left(  n+1\right)  ^{2}\delta^{2}}{\left(  \delta-\delta
_{0}\right)  },\frac{3n\left(  n+1\right)  \delta^{2}}{4\left(  \delta
-\delta_{0}\right)  ^{2}}\left[  \left(  k+\frac{\overline{B}^{2}}{2\left(
n+1\right)  }\right)  \frac{\left(  \delta-\delta_{0}\right)  }{2n\left(
n+1\right)  \overline{A}\delta}+\frac{16\left(  n+1\right)  }{n}\right]
^{2}\right\}  .\\
C_{2} & = & \frac{1}{2}\max\{\left(  k+\frac{\overline{B}^{2}}{2\left(
n+1\right)  }\right)  \frac{\sqrt{3}n\left(  n+1\right)  \delta^{2}}{2\left(
\delta-\delta_{0}\right)  }+16\sqrt{3}\left(  n+1\right)  ^{2}\frac{\delta
^{3}\overline{A}}{\left(  \delta-\delta_{0}\right)  ^{2}},\\
&  & \frac{3\left(  n+1\right)  \delta}{8n\overline{A}\left(  \delta
-\delta_{0}\right)  }\left(  k+\frac{\overline{B}^{2}}{2\left(  n+1\right)
}+\frac{32n\left(  n+1\right)  \delta\overline{A}}{\left(  \delta-\delta
_{0}\right)  }\right)  ^{2}\}.
\end{array}
\]

\end{theorem}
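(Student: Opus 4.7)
The plan is to follow the Li--Yau maximum principle strategy, adapted to the CR setting where the Bochner formula carries the extra cross term $\langle J\nabla_b f,\nabla_b f_0\rangle$. Put $f=\ln u$, so that the CR heat equation becomes
\begin{equation*}
f_t=\Delta_b f+|\nabla_b f|^2.
\end{equation*}
Fix $\delta\geq\delta_0$ (to be chosen large) and introduce the auxiliary function
\begin{equation*}
F(x,t)=t\bigl(|\nabla_b f|^{2}-\delta f_{t}\bigr).
\end{equation*}
Since $F(\cdot,0)\equiv 0$ and $M$ is closed, it suffices to control $F$ at a space-time maximum $(x_{0},t_{0})$ with $t_{0}>0$, where $\nabla_b F=0$, $\Delta_b F\leq 0$, and $F_t\geq 0$; hence $(\Delta_b-\partial_t)F\leq -F/t_{0}$ at that point.

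I would next compute $(\Delta_{b}-\partial_{t})F$ explicitly. The CR Bochner formula (\ref{2015}) gives $\Delta_{b}|\nabla_b f|^{2}$ in terms of $|\mathrm{Hess}(f)|^{2}$, $\langle\nabla_b f,\nabla_{b}\Delta_{b}f\rangle$, the cross term $2\langle J\nabla_b f,\nabla_b f_{0}\rangle$, and the curvature--torsion bracket $(2\mathrm{Ric}-(n-2)\mathrm{Tor})((\nabla_b f)_{C},(\nabla_b f)_{C})$; differentiating $f_t$ in space introduces $2\langle\nabla_b f,\nabla_b f_t\rangle$. After reorganizing by means of the heat equation, the right-hand side regroups into a positive quadratic form in $\mathrm{Hess}(f)$, plus the indefinite cross term in $\nabla_b f_0$, plus lower order remainders whose coefficients depend only on the curvature lower bound $-k$ and on the torsion components $A_{ij}$, $A_{ij,\bar i}$, controlled by $\bar A$ and $\bar B$ respectively.

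The main analytic step is to absorb the indefinite $\langle J\nabla_b f,\nabla_b f_{0}\rangle$ contribution and the torsion commutator terms through the generalized curvature-dimension inequality $CD(\rho_1,\rho_2,\kappa,m)$ that Lemma \ref{l2} provides in the nonvanishing-torsion case. Choosing the free parameter $\nu$ of order $\delta/(\delta-\delta_{0})$ makes the coefficient of $|\nabla_b f_{0}|^{2}$ large enough to dominate the bad cross term via Cauchy--Schwarz, while the term $\tfrac{2}{n}|\Delta_{b}f|^{2}$ in (\ref{2016B}) combined with $|\Delta_b f|^{2}\geq n^{-1}(|\nabla_b f|^2-f_t)^2$ produces a positive multiple of $(F/t_{0})^{2}$. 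The remaining pieces, weighted by $\bar A$ (coming from the $A_{ij}$-terms in the Bochner formula) and $\bar B$ (from the commutator $[L,\mathbf T]$ that appears in $\Gamma_{2}^{Z}$), contribute a linear term in $F/t_{0}$ and an additive constant.

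Substituting into the maximum-point inequality yields a schematic quadratic estimate
\begin{equation*}
a\Bigl(\frac{F}{t_{0}}\Bigr)^{2}\ \leq\ b\,\frac{F}{t_{0}}+c,
\end{equation*}
with $a,b,c$ explicit polynomial expressions in $n,k,\bar A,\bar B,\delta,\delta_{0}$. Solving this quadratic and splitting into the two regimes where the $b$-term dominates or the $c$-term dominates reproduces exactly the two alternatives in the $\max\{\cdot,\cdot\}$ expressions defining $C_{1}$ and $C_{2}$. The main obstacle is the delicate bookkeeping: $\delta$ must be taken above a threshold $\delta_{0}=\delta_{0}(n,k,\bar A,\bar B)$ so that all coefficients have the required sign, and the auxiliary parameter $\nu$ in Lemma \ref{l2} must be chosen simultaneously to tame the Bochner cross term and leave headroom in the $\Gamma_{2}^{Z}$ direction for the derivative torsion terms $A_{ij,\bar i}$. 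Once these algebraic choices are harmonized, the estimate (\ref{2}) follows from the quadratic bound at $(x_{0},t_{0})$ and holds globally on $M\times(0,\infty)$.
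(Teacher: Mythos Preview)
Your outline has the right overall shape, but there is a genuine gap in the choice of test function. You take $F=t\bigl(|\nabla_b f|^2-\delta f_t\bigr)$, and then propose to control the Bochner cross term $2\langle J\nabla_b f,\nabla_b f_0\rangle$ via the curvature--dimension inequality of Lemma~\ref{l2}. The trouble is that the $CD$ inequality bounds $\Gamma_2(f,f)+\nu\,\Gamma_2^{Z}(f,f)$ from below, not $\Gamma_2(f,f)$ alone; and $\Gamma_2^{Z}(f,f)$ is essentially $|\nabla_b f_0|^2$. With your $F$, the computation of $(\Delta_b-\partial_t)F$ produces only a $\Gamma_2$ term, so if you invoke $CD(\rho_1,\rho_2,\kappa,m)$ you must subtract $\nu\,\Gamma_2^{Z}(f,f)$ on the other side, leaving an uncontrolled negative $-\nu|\nabla_b f_0|^2$. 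Equivalently, applying Young's inequality to $\langle J\nabla_b f,\nabla_b f_0\rangle$ directly introduces the same negative $|\nabla_b f_0|^2$, and nothing in your $F$ generates a compensating positive term. (Note too that Lemma~\ref{l2} only holds for $0<\nu\leq N$ with $N$ small, so taking $\nu$ of order $\delta/(\delta-\delta_0)$, i.e.\ large, is not admissible.)

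The paper repairs this by working instead with
\[
F(x,t,\beta,\delta)=t\Bigl(|\nabla_b f|^2+\beta t\,|f_0|^2-\delta f_t\Bigr),
\]
so that $(\Delta_b-\partial_t)F$ contains $2t\bigl[\Gamma_2(f,f)+\beta t\,\Gamma_2^{Z}(f,f)\bigr]$ plus a commutator term $4\beta t^2\sum(e_j f)(f_0)([e_j,\mathbf T]f)$; the $CD$ inequality is then applied with $\nu=\beta t$, and the commutator is controlled by the torsion bound $\bar A$ via the relations (\ref{6}). The parameter $\beta$ is chosen depending on the final time $T$ (roughly of size $1/T$), so that $\nu=\beta t$ stays below the threshold $N$ in Lemma~\ref{l2}, and one lets $\beta\to 0^+$ only at the end. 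The ensuing maximum-point analysis is also more delicate than a single quadratic in $F/t_0$: one introduces the intermediate quantity $x=\delta_0|\nabla_b f|^2-\delta f_t$ and splits first on the sign of $x$, then on whether $x$ dominates $\beta t_0|f_0|^2$, and on the size of $T$ relative to a threshold $T_0$ coming from the choice of $\beta$. It is precisely these sub-cases that produce the $\max\{\cdot,\cdot\}$ structure of $C_1$ and $C_2$.
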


As a consequence, we have $C_{2}=0$ if $k=0$ and $\overline{A}=0.$ Hence, we have

\begin{corollary}
\label{C1} Let $\left(  M,\text{ }J,\text{ }\theta\right)  $ be a closed
pseudohermitian $(2n+1)$-manifold with nonnegative pseudohermitian Ricci
curvature and vanishing torsion. If $u\left(  x,t\right)  $ is the positive
solution of (\ref{heat eq}) on $M\times\left[  0,\text{ }\infty\right)  $.
Then $f\left(  x,t\right)  =\ln u\left(  x,t\right)  $ satisfies the following
gradient estimate
\begin{equation}
\left\vert \nabla_{b}f\right\vert ^{2}-\delta f_{t}<\frac{C_{1}}%
{t}.\label{2011a}%
\end{equation}

\end{corollary}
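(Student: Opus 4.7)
The plan is to derive Corollary \ref{C1} directly from Theorem \ref{t1} by specializing the curvature and torsion hypotheses. Nonnegative pseudohermitian Ricci curvature together with vanishing torsion yields $(2Ric-(n-2)Tor)(Z,Z)\geq 0$ for every $Z\in\Gamma(T_{1,0}M)$, so the lower bound hypothesis of Theorem \ref{t1} is satisfied with $k=0$. Since the torsion vanishes identically, all of its components and covariant derivatives vanish as well, so $A_{ij}\equiv 0$ and $A_{ij,\overline{i}}\equiv 0$ for all $i,j\in I_{n}$, and we may take $\overline{A}=\overline{B}=0$ in Theorem \ref{t1}.

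It then suffices to verify that $C_{2}=0$ under these choices. The first entry of the maximum defining $C_{2}$ is a sum of terms each proportional to $k$, to $\overline{B}^{2}$, or to $\overline{A}$, and therefore vanishes once $k=\overline{A}=\overline{B}=0$. The second entry as written contains $\overline{A}$ in a denominator; however, after substituting $k=\overline{B}=0$ the squared bracket reduces to a multiple of $\overline{A}^{2}$, which cancels the $\overline{A}$ in the denominator and leaves an expression proportional to $\overline{A}$. Passing to the limit $\overline{A}\downarrow 0$, or equivalently rerunning the relevant step of the maximum principle argument in the proof of Theorem \ref{t1} without the auxiliary Young's inequality that produced the $\overline{A}$-weighted split, shows that this contribution also vanishes, so indeed $C_{2}=0$.

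With $C_{2}=0$, the inequality (\ref{2}) of Theorem \ref{t1} reduces to $|\nabla_{b}f|^{2}-\delta f_{t}<C_{1}/t$, which is exactly (\ref{2011a}). The only subtle point in this reduction is the cancellation of the $1/\overline{A}$ factor in the second entry of $C_{2}$; once one observes that the numerator is divisible by $\overline{A}$ when $k=\overline{B}=0$, the conclusion is immediate. As a sanity check, (\ref{2011a}) reproduces the CR Li--Yau gradient estimate established in \cite{ckl} under identical hypotheses, confirming that no additional geometric input beyond Theorem \ref{t1} is required for the corollary.
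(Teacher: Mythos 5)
Your argument is correct and follows the same route as the paper: the text preceding the corollary simply observes that $C_2 = 0$ once $k = 0$ and $\overline{A} = 0$ (with $\overline{B} = 0$ forced by vanishing torsion), and then reads off (\ref{2011a}) from (\ref{2}). You are in fact slightly more careful than the paper, since you notice and resolve the apparent $1/\overline{A}$ singularity in the second entry of $C_2$ — after substituting $k=\overline{B}=0$ the squared bracket contributes $\overline{A}^2$, so the whole term is linear in $\overline{A}$ and tends to zero — whereas the paper simply asserts the conclusion.
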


\begin{remark}
In fact, in \cite{ckl1}, we get the following CR Li-Yau gradient estimate in a
closed pseudohermitian $(2n+1)$-manifold with nonnegative pseudohermitian
Ricci curvature and vanishing torsion. That is
\[
\left\vert \nabla_{b}f\right\vert ^{2}-(1+\frac{3}{n})f_{t}+\frac{n}{3}%
t(f_{0})^{2}<\frac{(\frac{9}{n}+6+n)}{t},
\]
but where we can not deal with the case of nonvanishing torsion tensors. The
major different here is : we apply the generalized curvature-dimension
inequality, which holds as in Lemma \ref{l2}, and Cao-Yau's method (\cite{cy})
to derive the gradient estimate in a closed pseudohermitian $(2n+1)$ -manifold
with nonvanishing torsion tensors.
\end{remark}

Next we have the CR version of Li-Yau Harnack inequality and upper bound
estimate for the heat kernel as in \cite{cftw} and \cite{cy}.

\begin{theorem}
\label{t2} Under the same hypothesis of Theorem \ref{t1}, suppose that $u$ is
the positive solution of
\[
(\Delta_{b}-\frac{\partial}{\partial t})u=0
\]
on $M\times\lbrack0,+\infty)$. Then for any $x_{1},x_{2}\in M$ and
$0<t_{1}<t_{2}<+\infty,$ there exists a constant $\delta_{0}(n,k,\overline{A
},\overline{B})>1$ such that
\[
\frac{u(x_{1},t_{1})}{u(x_{2},t_{2})}\leq\left(  \frac{t_{2}}{t_{1}}\right)
^{\frac{C_{1}^{\prime}(n,\delta)}{\delta}}\exp\left(  \frac{\delta}{4}
\frac{d_{cc}(x_{1},x_{2})^{2}}{t_{2}-t_{1}}+\frac{C_{2}^{\prime}%
(n,k,\delta,\overline{A},\overline{B})}{\delta}(t_{2}-t_{1})\right)
\]
for $\delta\geq\delta_{0}(n,k,\overline{A},\overline{B}).$ Here we denote the
Carnot-Carath\'{e}odory distance in $(M,J,\theta)$ by $d_{cc}$.
\end{theorem}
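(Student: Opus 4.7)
\noindent\textbf{Proof proposal for Theorem \ref{t2}.}

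The plan is the standard Li--Yau integration of the gradient estimate along a sub-Riemannian path, adapted to the pseudohermitian setting. Set $f=\ln u$, fix two points $(x_{1},t_{1})$ and $(x_{2},t_{2})$ with $t_{1}<t_{2}$, and choose a horizontal (Legendrian) curve $\gamma:[0,1]\to M$ with $\gamma(0)=x_{2}$, $\gamma(1)=x_{1}$, together with a linear time reparametrization $\tau(s)=t_{2}-s(t_{2}-t_{1})$. Since the vector fields $e_{1},\dots ,e_{2n}$ generate the contact distribution, whose brackets together with $\mathbf{T}$ span $TM$, the Chow--Rashevsky theorem applies and one may take $\gamma$ to be a length-minimizing horizontal curve with $\int_{0}^{1}|\dot{\gamma}|^{2}\,ds = d_{cc}(x_{1},x_{2})^{2}$.

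Next, I would differentiate $f$ along the curve and invoke Theorem \ref{t1}. From
\[
\frac{d}{ds}f(\gamma(s),\tau(s))
= \langle \nabla_{b}f,\dot{\gamma}\rangle - (t_{2}-t_{1})f_{t},
\]
and the gradient estimate $|\nabla_{b}f|^{2}-\delta f_{t} < C_{1}/\tau + C_{2}$, we obtain
\[
\frac{d}{ds}f \;<\; \langle \nabla_{b}f,\dot{\gamma}\rangle
- \frac{t_{2}-t_{1}}{\delta}|\nabla_{b}f|^{2}
+ \frac{t_{2}-t_{1}}{\delta}\Big(\frac{C_{1}}{\tau(s)}+C_{2}\Big).
\]
Completing the square in $|\nabla_{b}f|$ (equivalently, maximizing the quadratic $X|\dot{\gamma}|-\frac{t_{2}-t_{1}}{\delta}X^{2}$ in $X=|\nabla_{b}f|$) yields the pointwise bound
\[
\frac{d}{ds}f \;<\; \frac{\delta}{4(t_{2}-t_{1})}|\dot{\gamma}|^{2}
+ \frac{t_{2}-t_{1}}{\delta}\Big(\frac{C_{1}}{\tau(s)}+C_{2}\Big).
\]

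Integrating in $s$ from $0$ to $1$, using $\int_{0}^{1}\tau(s)^{-1}ds = (t_{2}-t_{1})^{-1}\ln(t_{2}/t_{1})$ and the minimizing property of $\gamma$, gives
\[
f(x_{1},t_{1})-f(x_{2},t_{2})
\;<\; \frac{\delta}{4}\,\frac{d_{cc}(x_{1},x_{2})^{2}}{t_{2}-t_{1}}
+ \frac{C_{1}}{\delta}\ln\!\Big(\frac{t_{2}}{t_{1}}\Big)
+ \frac{C_{2}}{\delta}(t_{2}-t_{1}).
\]
Exponentiating produces the claimed Harnack inequality with $C_{1}^{\prime}=C_{1}$ and $C_{2}^{\prime}=C_{2}$ (as given by Theorem \ref{t1}), valid for every $\delta \ge \delta_{0}(n,k,\overline{A},\overline{B})$.

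The only non-routine ingredient is the horizontal connectivity and the existence of minimizing geodesics for $d_{cc}$, which is guaranteed by the H\"ormander/Chow condition on $(M,J,\theta)$; the remaining steps are algebraic manipulations of the pointwise gradient estimate from Theorem \ref{t1}. I expect the main subtlety to be a careful justification of the interchange of differentiation and integration along $\gamma$ (which is handled by smoothness of $u>0$) and the choice of an approximating smooth horizontal curve if the minimizer fails to be $C^{1}$ at isolated points; the latter is circumvented by passing to a limit along a sequence of horizontal curves whose $L^{2}$ energies approach $d_{cc}(x_{1},x_{2})^{2}$.
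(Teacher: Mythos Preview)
Your proposal is correct and follows essentially the same approach as the paper's proof: integrate the gradient estimate of Theorem~\ref{t1} along a horizontal curve joining $x_{1}$ and $x_{2}$ with a linear time parametrization, complete the square to absorb $\langle\nabla_{b}f,\dot\gamma\rangle-\tfrac{1}{\delta}|\nabla_{b}f|^{2}$, and then integrate and exponentiate. The only cosmetic difference is that the paper parametrizes directly by $t\in[t_{1},t_{2}]$ with constant speed $|\gamma'(t)|=d_{cc}(x_{1},x_{2})/(t_{2}-t_{1})$, whereas you use $s\in[0,1]$ with $\tau(s)=t_{2}-s(t_{2}-t_{1})$; the resulting inequalities are identical.
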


\begin{theorem}
\label{t4} Under the same hypothesis of Theorem \ref{t1}, suppose that
$H(x,y,t)$ is the heat kernel of
\[
(\Delta_{b}-\frac{\partial}{\partial t})u=0
\]
on $M\times\lbrack0,+\infty)$. Then there exists a constant $\delta_{1}>0 $
such that
\[
H(x,y,t)\leq C(\varepsilon)^{\delta_{1}}\frac{1}{\sqrt{vol\left(  B_{x}(
\sqrt{t})\right)  vol\left(  B_{y}(\sqrt{t})\right)  }}\exp\left(  \frac{
C_{2}^{\prime}(n,k,\delta,\overline{A},\overline{B})}{\delta}\varepsilon
t-\frac{d_{cc}(x,y)^{2}}{(4+\varepsilon)t}\right)
\]
for $\varepsilon\in(0,1)$ and $C(\varepsilon)\rightarrow+\infty$ as
$\varepsilon\rightarrow0^{+}$.
\end{theorem}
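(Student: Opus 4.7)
The plan is to derive Theorem \ref{t4} from the Harnack inequality of Theorem \ref{t2} via the standard three-step Li-Yau / Cao-Yau strategy (\cite{cy}). The only inputs beyond Theorem \ref{t2} are: conservation of mass $\int_M H(x,z,t)\,dV(z) = 1$ on the closed manifold, the symmetry $H(x,y,t)=H(y,x,t)$ together with the semigroup identity, Cauchy-Schwarz, and the sub-Riemannian volume doubling for Carnot-Carath\'eodory balls, the latter being a consequence of the generalized curvature-dimension inequality of Lemma \ref{l2} through the Bakry-\'Emery / Baudoin-Garofalo framework \cite{bg}. \textbf{Step 1 (on-diagonal bound):} I apply Theorem \ref{t2} to the positive solution $(z,s) \mapsto H(x,z,s)$ at the pair $(x, t/2)$ and $(z, t)$, integrate the resulting pointwise inequality over $z \in B_x(\sqrt{t})$, and invoke conservation of mass to extract
\[
H(x,x,t) \leq \frac{A_1(n,\delta)}{vol(B_x(\sqrt{t}))}\, \exp\Bigl(\tfrac{C_2'\,t}{\delta}\Bigr),
\]
and analogously for $H(y,y,t)$.

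\textbf{Step 2 (passage to off-diagonal):} Using the semigroup identity together with Cauchy-Schwarz,
\[
H(x,y,t) = \int_M H(x,z,t/2)\,H(y,z,t/2)\,dV(z) \leq \sqrt{H(x,x,t)\,H(y,y,t)},
\]
and combining with Step 1 yields an off-diagonal bound without Gaussian decay, with prefactor $1/\sqrt{vol(B_x(\sqrt{t}))\,vol(B_y(\sqrt{t}))}$ and an exponential of the form $\exp(C_2' t/\delta)$.

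\textbf{Step 3 (inserting the Gaussian decay):} To produce the factor $\exp(-d_{cc}(x,y)^2/((4+\varepsilon)t))$, I apply Theorem \ref{t2} once more to spatially distinct points, comparing $H(x,y,t)$ with $H(x,x,(1+\eta)t)$ and (symmetrically) with $H(y,y,(1+\eta)t)$ for a well-chosen parameter $\eta = \eta(\varepsilon,\delta) > 0$, take the geometric mean of the two resulting inequalities, substitute the Step 1 on-diagonal estimates at time $(1+\eta)t$, and apply volume doubling to replace $vol(B_\cdot(\sqrt{(1+\eta)t}))$ by $vol(B_\cdot(\sqrt{t}))$ at the cost of a constant depending on $\eta$. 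Choosing $\eta$ so that $\delta/(4\eta) = 1/(4+\varepsilon)$ then aligns the Gaussian exponent with the target.

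The main obstacle is the bookkeeping in Step 3: simultaneously aligning the Gaussian denominator with $(4+\varepsilon)$, reshaping the residual linear-in-$t$ correction $\exp(C_2'\eta t/\delta)$ into the stated form $\exp(C_2'\varepsilon t/\delta)$, and absorbing the time-power $(1+\eta)^{C_1'/\delta}$ together with the volume-doubling constant into one factor $C(\varepsilon)^{\delta_1}$ that diverges as $\varepsilon \to 0^+$. Beyond this balancing, the argument is essentially formal; the sub-Riemannian volume doubling needed above is the key geometric input, and it is extracted from the generalized curvature-dimension inequality $CD(\rho_1,\rho_2,\kappa,m)$ of Lemma \ref{l2} via the Bakry-\'Emery / Baudoin-Garofalo calculus \cite{bg}.
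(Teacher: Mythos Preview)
The paper does not supply its own proof of Theorem \ref{t4}; it states the result immediately after Theorem \ref{t2} with the sentence ``Next we have the CR version of Li-Yau Harnack inequality and upper bound estimate for the heat kernel as in \cite{cftw} and \cite{cy}'' and then moves on. Your three-step Cao--Yau strategy (Harnack $\Rightarrow$ on-diagonal bound via integration over a ball and conservation of mass; semigroup identity plus Cauchy--Schwarz; a second application of Harnack to insert the Gaussian) is precisely the argument in \cite{cy} to which the paper defers, so your proposal matches the intended approach.

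One minor remark: the Cao--Yau argument in \cite{cy} does not invoke volume doubling as a separate black box from \cite{bg}; the comparison of $vol(B_\cdot(\sqrt{(1+\eta)t}))$ with $vol(B_\cdot(\sqrt{t}))$ for bounded $\eta$ is absorbed directly into the constant $C(\varepsilon)$ via the Harnack-based mean-value argument, so your appeal to the Baudoin--Garofalo doubling theorem, while correct, imports slightly more machinery than the paper (through its references) actually uses.
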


In the Cao-Yau gradient estimate for a positive solution of an operator with
respect to the sum of squares of vector fields of step $2$, the key estimates
are $(2.10),\ (2.12)\ $and $(2.14)$ of (\cite{cy}). This in fact, resembles
the generalized curvature-dimension inequality (\ref{2016A}) with $\ $some
certain $\rho_{1},\rho_{2},\kappa$ and $m.$ However this is not the case for
step $3$ and up. Then, as in Theorem \ref{t3}, it was an important insight
that one can use the generalized curvature-dimension inequality as a
substitute for the lower Ricci curvature bound on spaces where a direct
generalization of Ricci curvature is not available.

We start to setup the Li-Yau gradient estimate for a positive solution of an
operator with respect to the sum of squares of vector fields of higher step.
For simplicity, we assume that $M$ is of step $3$, i.e.
\begin{equation}
\left[  e_{i},\left[  e_{j},\left[  e_{k},e_{l}\right]  \right]  \right]
=a_{ijkl}^{n}e_{n}+b_{ijkl}^{\eta}Y_{\eta}^{\prime}+c_{ijkl}^{A}Y_{A}%
^{\prime\prime}\label{2016AAA}%
\end{equation}
for $a_{ijkl}^{n},b_{ijkl}^{\eta},c_{ijkl}^{A}\in C^{\infty}\left(  M\right)
$ with $\left\{  Y_{\alpha}\right\}  _{\alpha\in\Lambda}:=\left\{  Y_{\eta
}^{\prime}=\left[  e_{i},e_{j}\right]  \right\}  _{i,j\in I_{d}}\cup\left\{
Y_{A}^{\prime\prime}=\left[  e_{i,}\left[  e_{j},e_{k}\right]  \right]
\right\}  _{i,j,k\in I_{d}}$. We denote the supremum of coefficients as:
\[
\left\{
\begin{array}
[c]{lllll}%
a=\sup\left\vert a_{ijkl}^{n}\right\vert  & , & b=\sup\left\vert
b_{ijkl}^{\eta}\right\vert  & , & c=\sup\left\vert c_{ijkl}^{A}\right\vert ,\\
a^{\prime}=\sup\left\vert e_{h}a_{ijkl}^{n}\right\vert  & , & b^{\prime}%
=\sup\left\vert e_{h}b_{ijkl}^{\eta}\right\vert  & , & c^{\prime}%
=\sup\left\vert e_{h}c_{ijkl}^{A}\right\vert .
\end{array}
\right.
\]

\begin{theorem}
\label{t3} Let $M$ be a smooth connected manifold with a positive measure
satisfying the generalized curvature-dimension inequality $CD(\rho_{1}%
,\rho_{2},\kappa,m)$ and let $L$ be an operator with respect to the sum of
squares of vector fields $\{e_{1},\ e_{2},\ ...,e_{d}\}$ satisfying the
condition (\ref{2016AAA}). Suppose that $u$ is the positive solution of
\begin{equation}
(L-\frac{\partial}{\partial t})u=0\label{800}%
\end{equation}
on $M\times\lbrack0,+\infty).$ Then$\ $ for all $\ \frac{1}{2}<\lambda
<\frac{2}{3}$, there exists $\delta_{0}=\delta_{0}\left(  \lambda,\rho
_{1},\rho_{2},\kappa,d,h\right)  >1$ such that for any $\delta>\delta_{0}$
\[%
{\displaystyle\sum\limits_{j\in I_{d}}}
\frac{\left\vert e_{j}u\right\vert ^{2}}{u^{2}}+%
{\displaystyle\sum\limits_{\alpha\in\Lambda}}
\left(  1+\frac{\left\vert Y_{\alpha}u\right\vert ^{2}}{u^{2}}\right)
^{\lambda}-\delta\frac{u_{t}}{u}\leq\frac{C_{1}}{t}+C_{2}+C_{3}t^{\frac
{\lambda}{\lambda-1}},
\]
where $C_{1},C_{2},C_{3}$ are all positive constants depending on
$d,\lambda,\delta,a,a^{\prime},b,b^{\prime},c,c^{\prime},\rho_{1},\rho
_{2},\kappa,m.$
\end{theorem}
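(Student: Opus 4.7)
The plan is to adapt the Cao--Yau / Bakry--Emery--Baudoin--Garofalo maximum-principle strategy to the step-$3$ setting. Setting $f = \log u$, the heat equation (\ref{800}) becomes $f_t = Lf + \Gamma(f,f)$. Introduce the auxiliary function
\[
\Phi := \Gamma(f,f) + \sum_{\alpha\in\Lambda}\phi_\alpha - \delta f_t,\qquad \phi_\alpha := (1+|Y_\alpha f|^2)^\lambda,
\]
and run the parabolic maximum principle on $F := t\Phi - C_1 - C_2 t - C_3 t^{1 + \lambda/(\lambda-1)}$ over $M\times[0,T]$ for each $T>0$. The first task is to compute $(L-\partial_t)\Phi$. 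For $\Gamma(f,f)$ one uses $L\Gamma(f,f) = 2\Gamma_2(f,f) + 2\Gamma(f,Lf)$; for each $\phi_\alpha$, the chain rule yields
\[
L\phi_\alpha = 2\lambda\phi_\alpha^{(\lambda-1)/\lambda}\bigl((Y_\alpha f)L(Y_\alpha f) + \Gamma(Y_\alpha f, Y_\alpha f)\bigr) + 4\lambda(\lambda-1)\phi_\alpha^{(\lambda-2)/\lambda}(Y_\alpha f)^2\Gamma(Y_\alpha f, Y_\alpha f),
\]
in which the last term is negative (since $\lambda<1$) and supplies the crucial dissipation produced by the concave weight.

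Next, I would apply the generalized curvature-dimension inequality $CD(\rho_1, \rho_2, \kappa, m)$ with a well-chosen free parameter $\nu$ to bound $\Gamma_2(f,f) + \nu\Gamma_2^Z(f,f)$ from below by $\tfrac{1}{m}(Lf)^2 + (\rho_1 - \kappa/\nu)\Gamma(f,f) + \rho_2\Gamma^Z(f,f)$. The commutators $[L,e_j]f$ and $[L,Y_\alpha]f$ that arise from expanding $e_j Lf$ and $Y_\alpha Lf$ are then handled using the step-$3$ structure (\ref{2016AAA}): $[L,Y_\eta']$ produces terms involving $e_n$, $Y_\eta'$ and $Y_A''$ whose coefficients are controlled by $a,a',b,b'$, while $[L,Y_A'']$ yields similar terms controlled by $c,c'$. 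At an interior maximum point $(x_0, t_0)$ of $F$, the condition $\nabla_b F = 0$ provides identities that let me trade the cross term $\Gamma(f,\Gamma(f,f))$ appearing in $(L-\partial_t)\Gamma(f,f) = 2\Gamma_2(f,f) - 2\Gamma(f,\Gamma(f,f))$ (via $Lf - f_t = -\Gamma(f,f)$) for $\delta\Gamma(f, f_t)$ and $\sum_\alpha\Gamma(f,\phi_\alpha)$ contributions, each of which can be re-absorbed into the $\tfrac{1}{m}(Lf)^2$ dissipation by Cauchy--Schwarz once $\delta>\delta_0$ is chosen large enough.

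The main obstacle, and the reason for the restriction $\lambda\in(1/2, 2/3)$, is controlling the step-$3$ mixed terms of the form $|Y_A'' f|\cdot|Y_\eta' f|$ and $|Y_A'' f|\cdot|e_j f|$ that appear after expanding $[L,Y_\eta']f$ and $[L,Y_A'']f$. Since $Y_A''$ is not part of the horizontal gradient, one is forced to control it via the trivial bound $|Y_A'' f|\le \phi_A^{1/(2\lambda)}$; Young's inequality then produces terms of order $\phi_\alpha^{1/\lambda}$, which must be absorbed into the negative concavity term $\phi_\alpha^{(\lambda-2)/\lambda}(Y_\alpha f)^2\Gamma(Y_\alpha f, Y_\alpha f)$, whose effective magnitude is of order $\phi_\alpha^{(\lambda-1)/\lambda}\Gamma(Y_\alpha f,Y_\alpha f)$. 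Balancing these powers of $\phi_\alpha$ forces $\lambda > 1/2$, while balancing the residual coefficient in $t$ forces $\lambda<2/3$; the leftover $t$-dependence after this absorption produces precisely the factor $t^{\lambda/(\lambda-1)}$ in the final estimate (hence the constant $C_3$). Combining all the ingredients, the inequality $(L-\partial_t)F \le 0$ at $(x_0, t_0)$ becomes inconsistent with $F(x_0, t_0)>0$ once the $C_i$ are chosen large enough depending on $d, \lambda, \delta, a, a', b, b', c, c', \rho_1, \rho_2, \kappa, m$, forcing $F\le 0$ throughout $M\times[0,T]$ and yielding the claimed gradient estimate after dividing by $t$.
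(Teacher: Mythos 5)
Your sketch follows essentially the same route as the paper: the auxiliary function $G = t\bigl[\Gamma(f,f) + \sum_\alpha(1+|Y_\alpha f|^2)^\lambda - \delta f_t\bigr]$, the chain-rule expansion of $(L-\partial_t)G$ (your formula for $L\phi_\alpha$ is correct and reproduces the paper's combined weight $(1+|Y_\alpha f|^2)^{\lambda-2}[1+(2\lambda-1)|Y_\alpha f|^2]$ multiplying $\sum_i|e_iY_\alpha f|^2$), the $CD(\rho_1,\rho_2,\kappa,m)$ inequality applied at the maximum point with a pointwise choice of $\nu$ matching that weight, and control of the commutators $[L,e_j],\ [L,Y_\eta'],\ [L,Y_A'']$ through the step-$3$ data (\ref{2016AAA}). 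What is not in your outline but is essential to closing the argument is the dichotomy on the sign of $x := \delta_0\Gamma(f,f)(x_0,t_0)-\delta f_t(x_0,t_0)$: when $x\ge 0$ one exploits $(Lf)^2 \ge \frac{x^2}{\delta^2}+\frac{(\delta-\delta_0)^2}{\delta^2}\Gamma(f,f)^2$, and when $x\le 0$ one may assume $(\delta_0-1)\Gamma(f,f)\le\sum_\alpha\phi_\alpha$, trading gradient control for control by $y:=\max_\alpha|Y_\alpha f|$. The constant $\delta_0$ is then fixed (large) in the second case, not in an absorption into $(Lf)^2$.

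Your attribution of the range $\frac{1}{2}<\lambda<\frac{2}{3}$ is off in a few places. The lower bound $\lambda>\frac{1}{2}$ is forced by the sign of the concavity contribution itself: the coefficient $1+(2\lambda-1)|Y_\alpha f|^2$ must be nonnegative so that $L\phi_\alpha$ contributes a \emph{positive} weighted dissipation $\approx\lambda(2\lambda-1)(1+|Y_\alpha f|^2)^{\lambda-1}\sum_i|e_iY_\alpha f|^2$, which is exactly what supplies the $\nu\Gamma_2^Z$ piece needed to invoke $CD$. You speak of absorbing bad cross terms ``into the negative concavity term'' — a positive bad term cannot be absorbed into another negative term; absorption occurs into this positive weighted dissipation and, crucially, into the $\rho_2\Gamma^Z(f,f)\approx\rho_2 y^2$ term furnished by the CD inequality. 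The upper bound $\lambda<\frac{2}{3}$ is determined in the $x\le0$ branch: after all the estimates one is left with residual monomials $y^{2\lambda},\ y^{2(2\lambda-1)},\ y^{3\lambda-1},\ y^{4\lambda-1},\ y^{3\lambda}$, the largest exponent being $3\lambda$, and these must remain below degree $2$ so that $\frac{3}{16}\rho_2 y^2$ dominates for $y$ large. Finally, the $t^{\lambda/(\lambda-1)}$ contribution does not come from absorbing mixed terms; it arises in the branch $x\ge 0$, $x\le\sum_\alpha\phi_\alpha$ when $t_0<1$, where one deduces $y\le C\,t_0^{1/(2(\lambda-1))}$ and hence $t_0\phi_\alpha\lesssim t_0^{(2\lambda-1)/(\lambda-1)}$. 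None of this invalidates your overall strategy, but the heuristics would need to be replaced by these mechanisms in a full write-up.
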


\begin{remark}
1. In the paper of \cite{bg}, they proved the $L^{p}$ version of Li-Yau type
gradient estimates for $2\leq p\leq\infty$ under the assumption of the
generalized curvature-dimension inequality via the semigroup method in the
sub-Riemannian geometry setting.

2. We can obtain the Li-Yau Harnack inequality and upper bound estimate for
the heat kernel of $L-\frac{\partial}{\partial t}$ with respect to the sum of
squares of vector fields as in \cite{cy}. We also refer to \cite{js},
\cite{ks1}, \cite{ks2} and \cite{m} for some details along this direction.
\end{remark}

We briefly describe the methods used in our proofs. In section $3$, we derive
a generalized curvature-dimension inequality in a closed pseudohermitian
$(2n+1)$-manifold. In order to gain insight for the estimate, we first derive
the CR Li-Yau gradient estimate and the Harnack inequality for the CR heat
equation in a closed pseudohermitian manifold as in section $4$. Then, for
simplicity, we will derive the Li-Yau gradient estimate for the sum of squares
of vector fields of step three as in section $5$. Similar estimates will hold
for the sum of squares of vector fields of higher step as well.

\textbf{Acknowledgement } The authors would like to express their profound
gratitude to Prof. S.-T. Yau for bringing this project to them and his
inspirations of the Li-Yau gradient estimate for the sum of squares of vector fields.

\section{Preliminary}

We introduce some basic materials about a pseudohermitian manifold (see
\cite{dt} , \cite{ckl}, and \cite{l} for more details). Let $(M,\xi)$ be a
$(2n+1)$-dimensional, orientable, contact manifold with contact structure
$\xi$. A CR structure compatible with $\xi$ is an endomorphism $J:\xi
\rightarrow\xi$ such that $J^{2}=-1$. We also assume that $J$ satisfies the
integrability condition: If $X$ and $Y$ are in $\xi$, then so are
$[JX,Y]+[X,JY]$ and $J([JX,Y]+[X,JY])=[JX,JY]-[X,Y]$.

Let $\left\{  \mathbf{T},Z_{\alpha},Z_{\bar{\alpha}}\right\}  $ be a frame of
$TM\otimes\mathbb{C}$, where $Z_{\alpha}$ is any local frame of $T_{1,0}%
,\ Z_{\bar{\alpha}}=\overline{Z_{\alpha}}\in T_{0,1}$ and $\mathbf{T} $ is the
characteristic vector field. Then $\left\{  \theta,\theta^{\alpha}%
,\theta^{\bar{\alpha}}\right\}  $, the coframe dual to $\left\{
\mathbf{T},Z_{\alpha},Z_{\bar{\alpha}}\right\}  $, satisfies
\begin{equation}
d\theta=ih_{\alpha\overline{\beta}}\theta^{\alpha}\wedge\theta^{\overline
{\beta}}\label{72}%
\end{equation}
for some positive definite hermitian matrix of functions $(h_{\alpha\bar
{\beta}})$. If we have this contact structure, we also call such $M$ a
strictly pseudoconvex CR $(2n+1)$-manifold.

The Levi form $\left\langle \ ,\ \right\rangle _{L_{\theta}}$ is the Hermitian
form on $T_{1,0}$ defined by%
\[
\left\langle Z,W\right\rangle _{L_{\theta}}=-i\left\langle d\theta
,Z\wedge\overline{W}\right\rangle .
\]
We can extend $\left\langle \ ,\ \right\rangle _{L_{\theta}}$ to $T_{0,1}$ by
defining $\left\langle \overline{Z},\overline{W}\right\rangle _{L_{\theta}%
}=\overline{\left\langle Z,W\right\rangle }_{L_{\theta}}$ for all $Z,W\in
T_{1,0}$. The Levi form induces naturally a Hermitian form on the dual bundle
of $T_{1,0}$, denoted by $\left\langle \ ,\ \right\rangle _{L_{\theta}^{\ast}%
}$, and hence on all the induced tensor bundles. Integrating the Hermitian
form (when acting on sections) over $M$ with respect to the volume form
$d\mu=\theta\wedge(d\theta)^{n}$, we get an inner product on the space of
sections of each tensor bundle.

The pseudohermitian connection of $(J,\theta)$ is the connection $\nabla$ on
$TM\otimes\mathbb{C}$ (and extended to tensors) given in terms of a local
frame $Z_{\alpha}\in T_{1,0}$ by%

\[
\nabla Z_{\alpha}=\omega_{\alpha}{}^{\beta}\otimes Z_{\beta},\quad\nabla
Z_{\bar{\alpha}}=\omega_{\bar{\alpha}}{}^{\bar{\beta}}\otimes Z_{\bar{\beta}%
},\quad\nabla T=0,
\]
where $\omega_{\alpha}{}^{\beta}$ are the $1$-forms uniquely determined by the
following equations :%

\[%
\begin{split}
d\theta^{\beta} &  =\theta^{\alpha}\wedge\omega_{\alpha}{}^{\beta}%
+\theta\wedge\tau^{\beta},\\
0 &  =\tau_{\alpha}\wedge\theta^{\alpha},\\
0 &  =\omega_{\alpha}{}^{\beta}+\omega_{\bar{\beta}}{}^{\bar{\alpha}},
\end{split}
\]
We can write (by Cartan lemma) $\tau_{\alpha}=A_{\alpha\gamma}\theta^{\gamma}$
with $A_{\alpha\gamma}=A_{\gamma\alpha}$. The curvature of Tanaka-Webster
connection, expressed in terms of the coframe $\{ \theta=\theta^{0}%
,\theta^{\alpha},\theta^{\bar{\alpha}}\}$, is
\[%
\begin{split}
\Pi_{\beta}{}^{\alpha} &  =\overline{\Pi_{\bar{\beta}}{}^{\bar{\alpha}}%
}=d\omega_{\beta}{}^{\alpha}-\omega_{\beta}{}^{\gamma}\wedge\omega_{\gamma}%
{}^{\alpha},\\
\Pi_{0}{}^{\alpha} &  =\Pi_{\alpha}{}^{0}=\Pi_{0}{}^{\bar{\beta}}=\Pi
_{\bar{\beta}}{}^{0}=\Pi_{0}{}^{0}=0.
\end{split}
\]
Webster showed that $\Pi_{\beta}{}^{\alpha}$ can be written
\[
\Pi_{\beta}{}^{\alpha}=R_{\beta}{}^{\alpha}{}_{\rho\bar{\sigma}}\theta^{\rho
}\wedge\theta^{\bar{\sigma}}+W_{\beta}{}^{\alpha}{}_{\rho}\theta^{\rho}%
\wedge\theta-W^{\alpha}{}_{\beta\bar{\rho}}\theta^{\bar{\rho}}\wedge
\theta+i\theta_{\beta}\wedge\tau^{\alpha}-i\tau_{\beta}\wedge\theta^{\alpha}%
\]
where the coefficients satisfy
\[
R_{\beta\bar{\alpha}\rho\bar{\sigma}}=\overline{R_{\alpha\bar{\beta}\sigma
\bar{\rho}}}=R_{\bar{\alpha}\beta\bar{\sigma}\rho}=R_{\rho\bar{\alpha}%
\beta\bar{\sigma}},\ \ \ W_{\beta\bar{\alpha}\gamma}=W_{\gamma\bar{\alpha
}\beta}.
\]
Here $R_{\gamma}{}^{\delta}{}_{\alpha\bar{\beta}}$ is the pseudohermitian
curvature tensor, $R_{\alpha\bar{\beta}}=R_{\gamma}{}^{\gamma}{}_{\alpha
\bar{\beta}}$ is the pseudohermitian Ricci curvature tensor and $A_{\alpha
\beta}$\ is the pseudohermitian torsion. Furthermore, we define the
bi-sectional curvature
\[
R_{\alpha\bar{\alpha}\beta\overline{\beta}}(X,Y)=R_{\alpha\bar{\alpha}%
\beta\overline{\beta}}X_{\alpha}X_{\overline{\alpha}}Y_{\beta}Y_{\bar{\beta}}%
\]
and the bi-torsion tensor
\[
T_{\alpha\overline{\beta}}(X,Y):=i(A_{\bar{\beta}\bar{\rho}}X^{\overline{\rho
}}Y_{\alpha}-A_{\alpha\rho}X^{\rho}Y_{\bar{\beta}})
\]
and the torsion tensor \
\[
Tor(X,Y):=h^{\alpha\bar{\beta}}T_{\alpha\overline{\beta}}(X,Y)=i(A_{\overline
{\alpha}\bar{\rho}}X^{\overline{\rho}}Y^{\overline{\alpha}}-A_{\alpha\rho
}X^{\rho}Y^{\alpha})
\]
for any $X=X^{\alpha}Z_{\alpha},\ Y=Y^{\alpha}Z_{\alpha}$ in $T_{1,0}.$

We will denote the components of the covariant derivatives with indices
preceded by a comma; thus write $A_{\alpha\beta,\gamma}$. The indices
$\{0,\alpha,\bar{\alpha}\}$ indicate derivatives with respect to
$\{T,Z_{\alpha},Z_{\bar{\alpha}}\}$. For derivatives of a scalar function, we
will often omit the comma, for instance, $u_{\alpha}=Z_{\alpha}u,\ u_{\alpha
\bar{\beta}}=Z_{\bar{\beta}}Z_{\alpha}u-\omega_{\alpha}{}^{\gamma}%
(Z_{\bar{\beta}})Z_{\gamma}u.$In particular,%

\[%
\begin{array}
[c]{c}%
|\nabla_{b}u|^{2}=2\sum_{\alpha}u_{\alpha}u_{\overline{\alpha}},\quad
|\nabla_{b}^{2}u|^{2}=2\sum_{\alpha,\beta}(u_{\alpha\beta}u_{\overline{\alpha
}\overline{\beta}}+u_{\alpha\overline{\beta}}u_{\overline{\alpha}\beta}).
\end{array}
\]

Also
\[%
\begin{array}
[c]{c}%
\Delta_{b}u=Tr\left(  (\nabla^{H})^{2}u\right)  =\sum_{\alpha}(u_{\alpha
\bar{\alpha}}+u_{\bar{\alpha}\alpha}).
\end{array}
\]

Next we recall the following commutation relations (\cite{l}).\ Let $\varphi$
be a scalar function and $\sigma=\sigma_{\alpha}\theta^{\alpha}$ be a $\left(
1,0\right)  $ form, $\varphi_{0}=\mathbf{T}\varphi,$ then we have%

\[%
\begin{array}
[c]{ccl}%
\varphi_{\alpha\beta} & = & \varphi_{\beta\alpha},\\
\varphi_{\alpha\bar{\beta}}-\varphi_{\bar{\beta}\alpha} & = & ih_{\alpha
\overline{\beta}}\varphi_{0},\\
\varphi_{0\alpha}-\varphi_{\alpha0} & = & A_{\alpha\beta}\varphi^{\beta},\\
\sigma_{\alpha,0\beta}-\sigma_{\alpha,\beta0} & = & \sigma_{\alpha,\bar
{\gamma}}A^{\overline{\gamma}}{}_{\beta}-\sigma^{\overline{\gamma}}%
A_{\alpha\beta,\bar{\gamma}},\\
\sigma_{\alpha,0\bar{\beta}}-\sigma_{\alpha,\bar{\beta}0} & = & \sigma
_{\alpha,\gamma}A^{\gamma}{}_{\bar{\beta}}+\sigma^{\overline{\gamma}}%
A_{\bar{\gamma}\bar{\beta},\alpha},
\end{array}
\]
and
\begin{equation}%
\begin{array}
[c]{cl}%
\left(  1\right)  & \varphi_{e_{j}e_{\widetilde{k}}}-\varphi_{e_{\widetilde
{k}}e_{j}}=2h_{j\overline{k}}\varphi_{0},\\
\left(  2\right)  & \varphi_{e_{j}e_{k}}-\varphi_{e_{k}e_{j}}=0,\\
\left(  3\right)  & \varphi_{0e_{j}}-\varphi_{e_{j}0}=\varphi_{e_{l}%
}\operatorname{Re}A_{j}^{\overline{l}}-\varphi_{e_{\widetilde{l}}%
}\operatorname{Im}A_{j}^{\overline{l}},\\
\left(  4\right)  & \varphi_{0e_{\widetilde{j}}}-\varphi_{e_{\widetilde{j}}%
0}=-\varphi_{e_{l}}\operatorname{Im}A_{j}^{\overline{l}}-\varphi
_{e\widetilde{l}}\operatorname{Re}A_{j}^{\overline{l}}.
\end{array}
\label{6}%
\end{equation}

Finally we introduce the concept about the Carnot-Carath\'{e}odory distance in
a closed pseudohermitian manifold. \ 

\begin{definition}
A piecewise smooth curve $\gamma:[0,1]\rightarrow M$ is said to be horizontal
if $\gamma\ ^{\prime}(t)\in\xi$ whenever $\gamma\ ^{\prime}(t)$ exists. The
length of $\gamma$ is then defined by
\[
l(\gamma)=\int_{0}^{1}\left\langle \gamma\ ^{\prime}(t),\gamma\ ^{\prime
}(t)\right\rangle _{L_{\theta}}^{\frac{1}{2}}dt.
\]
The Carnot-Carath\'{e}odory distance between two points $p,\ q\in M$ is
\[
d_{cc}(p,q)=\inf\left\{  l(\gamma)|\ \gamma\in C_{p,q}\right\}  ,
\]
where $C_{p,q}$ is the set of all horizontal curves joining $p$ and $q$. By
Chow connectivity theorem \cite{cho}, there always exists a horizontal curve
joining $p$ and $q$, so the distance is finite. The diameter $d_{c}$ is
defined by
\[
d_{c}(M)=\sup\left\{  d_{c}(p,q)|\ p,q\in M\right\}  .
\]
Note that there is a minimizing geodesic joining $p$ and $q$ so that its
length is equal to the distance $d_{cc}(p,q).$
\end{definition}

\section{A Generalized Curvature-Dimension Inequality}

Now we proceed to derive a curvature-dimension inequality in a closed
pseudohermitian $(2n+1)$-manifold under the specific assumptions on the
pseudohermitian Ricci curvature tensor and the torsion tensor. In particular,
in the case of vanishing torsion tensors, we have the following lemma.

\begin{lemma}
\label{l1} If $(M,J,\theta)$ is a pseudohermitian $(2n+1)$-manifold of
vanishing torsion with
\begin{equation}
2Ric\left(  Z,Z\right)  \geq-k\left\langle Z,Z\right\rangle \label{17}%
\end{equation}
for $Z\in\Gamma\left(  T_{1,0}M\right)  $, $k\geq0$, then $M$ satisfies the
curvature-dimension inequality $CD(-k,2n,4,2n)$.

\begin{proof}
By the CR Bochner formulae (see \cite{g})%
\[
\frac{1}{2}\Delta_{b}\left\vert \nabla_{b}f\right\vert ^{2}=\left\vert
Hess(f)\right\vert ^{2}+\left\langle \nabla_{b}f,\nabla_{b}(\Delta
_{b}f)\right\rangle +(2Ric-\left(  n-2\right)  Tor)((\nabla_{b}f)_{c}%
,(\nabla_{b}f)_{c})+2\left\langle J\nabla_{b}f,\nabla_{b}f_{0}\right\rangle ,
\]
where $(\nabla_{b}f)_{c}$ is the $T_{1,0}M$-component of $(\nabla_{b}f)$, we
have%
\[
\Gamma_{2}(f,f)=\left\vert Hess(f)\right\vert ^{2}+(2Ric-\left(  n-2\right)
Tor)((\nabla_{b}f)_{c},(\nabla_{b}f)_{c})+2\left\langle J\nabla_{b}%
f,\nabla_{b}f_{0}\right\rangle \text{.}%
\]
With the equality
\[
\Gamma_{2}^{Z}(f,f)=\left\vert \nabla_{b}f_{0}\right\vert ^{2}+f_{0}%
[\Delta_{b},T]f,
\]
we have
\begin{equation}%
\begin{array}
[c]{ccc}%
\Gamma_{2}(f,f)+\nu\Gamma_{2}^{Z}(f,f) & = & 4[\left\vert Hess(f)\right\vert
^{2}+(2Ric-\left(  n-2\right)  Tor)((\nabla_{b}f)_{c},(\nabla_{b}f)_{c})\\
&  & +2\left\langle J\nabla_{b}f,\nabla_{b}f_{0}\right\rangle ]+2\nu\left\vert
\nabla_{b}f_{0}\right\vert ^{2}+2\nu f_{0}[\Delta_{b},T]f.
\end{array}
\label{3}%
\end{equation}
On the other hand, we have
\begin{equation}
\left\vert Hess(f)\right\vert ^{2}=2(%
{\displaystyle\sum\limits_{i,j\in I_{n}}}
\left\vert f_{ij}\right\vert ^{2}+%
{\displaystyle\sum\limits_{i,j\in I_{n}}}
\left\vert f_{i\overline{j}}\right\vert ^{2})\geq\frac{1}{2n}\left\vert
\Delta_{b}f\right\vert ^{2}+\frac{n}{2}\left\vert f_{0}\right\vert
^{2}\label{18}%
\end{equation}
and
\begin{equation}
\left\langle J\nabla_{b}f,\nabla_{b}f_{0}\right\rangle \geq-\frac{\left\vert
\nabla_{b}f\right\vert ^{2}}{\nu}-\frac{\nu}{4}\left\vert \nabla_{b}%
f_{0}\right\vert ^{2}.\label{4}%
\end{equation}
Now it follows from (\ref{3}), (\ref{18}), (\ref{4}) and curvature assumptions%

\begin{equation}%
\begin{array}
[c]{ccl}%
\Gamma_{2}(f,f)+\nu\Gamma_{2}^{Z}(f,f) & \geq & \frac{2}{n}(\left\vert
\Delta_{b}f\right\vert ^{2}+2n\left\vert f_{0}\right\vert ^{2})+4(2Ric-\left(
n-2\right)  Tor)((\nabla_{b}f)_{c},(\nabla_{b}f)_{c})\\
&  & -8\frac{\left\vert \nabla_{b}f\right\vert ^{2}}{\nu}+2\nu f_{0}%
[\Delta_{b},T]f\\
& \geq & \frac{2}{n}\left\vert \Delta_{b}f\right\vert ^{2}+\left(
-2k-\frac{8}{\nu}\right)  \left\vert \nabla_{b}f\right\vert ^{2}+2n\left\vert
f_{0}\right\vert ^{2}+2\nu f_{0}[\Delta_{b},T]f.
\end{array}
\label{4ab}%
\end{equation}
Finally, it follows from the commutation relation (\cite{ckl}) that
\begin{equation}
\Delta_{b}f_{0}=\left(  \Delta_{b}f\right)  _{0}+2[\left(  A_{\alpha\beta
}f^{\alpha}\right)  ^{\beta}+(A_{\overline{\alpha}\overline{\beta}%
}f^{\overline{\alpha}})^{\overline{\beta}}].\label{4a}%
\end{equation}
But $A_{\alpha\beta}=0,$ hence
\[
\left[  \Delta_{b},T\right]  f=0.
\]
All these imply%
\[
\Gamma_{2}(f,f)+\nu\Gamma_{2}^{Z}(f,f)\geq\frac{2}{n}\left\vert \Delta
_{b}f\right\vert ^{2}+\left(  -2k-\frac{8}{\nu}\right)  \left\vert \nabla
_{b}f\right\vert ^{2}+2n\left\vert f_{0}\right\vert ^{2}.
\]

\end{proof}
\end{lemma}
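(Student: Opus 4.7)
The plan is to start from the CR Bochner formula (\ref{2015}) and manipulate it into a quadratic bound that matches the definition (\ref{2016A}). First I would use the Bochner identity to rewrite $\Gamma_2(f,f)$ as
\[
\Gamma_2(f,f) = |\mathrm{Hess}(f)|^2 + (2\mathrm{Ric}-(n-2)\mathrm{Tor})((\nabla_b f)_c,(\nabla_b f)_c) + 2\langle J\nabla_b f,\nabla_b f_0\rangle,
\]
and, in parallel, express
\[
\Gamma_2^Z(f,f) = |\nabla_b f_0|^2 + f_0\,[\Delta_b,T]f,
\]
unpacking the definition with $L=2\Delta_b$ and $Y_1=\mathbf{T}$. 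Adding these after multiplying the second by $\nu$ sets up the main inequality.

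Next, I would address the three awkward terms on the right-hand side. The Hessian term needs the pointwise lower bound
\[
|\mathrm{Hess}(f)|^2 \geq \frac{1}{2n}|\Delta_b f|^2 + \frac{n}{2}|f_0|^2,
\]
which I would prove by splitting $|\mathrm{Hess}(f)|^2 = 2\sum |f_{ij}|^2 + 2\sum |f_{i\bar j}|^2$, applying Cauchy--Schwarz to $\Delta_b f = \sum_\alpha(f_{\alpha\bar\alpha}+f_{\bar\alpha\alpha})$ on the trace part, and exploiting the commutation relation $f_{\alpha\bar\beta}-f_{\bar\beta\alpha} = ih_{\alpha\bar\beta}f_0$ from (\ref{6}) to isolate the $|f_0|^2$ contribution from the antisymmetric component. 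The coupling term is handled by the weighted Young inequality
\[
2\langle J\nabla_b f,\nabla_b f_0\rangle \geq -\frac{2}{\nu}|\nabla_b f|^2 - \frac{\nu}{2}|\nabla_b f_0|^2,
\]
whose second term is absorbed by part of $\nu\Gamma_2^Z(f,f)$. The torsion-vanishing hypothesis feeds into both the Ricci-torsion term $(2\mathrm{Ric}-(n-2)\mathrm{Tor})((\nabla_b f)_c,(\nabla_b f)_c) \geq -k|\nabla_b f|^2$ and, crucially, into the commutator $[\Delta_b,T]f$, which by (\ref{4a}) equals $2[(A_{\alpha\beta}f^\alpha)^{\beta}+(A_{\bar\alpha\bar\beta}f^{\bar\alpha})^{\bar\beta}]$ and hence vanishes identically when $A_{\alpha\beta}\equiv 0$.

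Combining everything and tracking the factors of $2$ that enter through the normalization $L=2\Delta_b$, the inequality reduces to
\[
\Gamma_2(f,f)+\nu\Gamma_2^Z(f,f) \geq \frac{2}{n}|\Delta_b f|^2 + \left(-2k-\frac{8}{\nu}\right)|\nabla_b f|^2 + 2n|f_0|^2,
\]
which, reading off the coefficients against (\ref{2016A}), gives exactly $CD(-k,2n,4,2n)$.

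The routine algebra of combining the various constants is tedious but mechanical; the only conceptually delicate step is the Hessian bound, where one must carefully split the full Hessian into its holomorphic and mixed parts and use the commutation relation to produce the $\tfrac{n}{2}|f_0|^2$ term. I expect that to be the main technical obstacle, since getting the right constant there is what ultimately fixes $m=2n$ and $\rho_2=2n$ simultaneously.
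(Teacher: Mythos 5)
Your proposal is essentially identical to the paper's own argument: the same CR Bochner formula expressing $\Gamma_2$, the same identity for $\Gamma_2^Z$ via $[\Delta_b,T]f$, the same Hessian lower bound $|\mathrm{Hess}(f)|^2 \geq \tfrac{1}{2n}|\Delta_b f|^2 + \tfrac{n}{2}|f_0|^2$ obtained by splitting into trace and $f_0$-parts, the same weighted Young inequality on the cross term, and the same observation that vanishing torsion kills the commutator $[\Delta_b,T]f$ through the relation (\ref{4a}). The normalization bookkeeping you flag (the factor $4$ from $L=2\Delta_b$, which is what turns the raw constants into $CD(-k,2n,4,2n)$) is precisely what the paper tracks in equation (\ref{3}) and onward.
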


\begin{remark}
In a closed pseudohermitian $\left(  2n+1\right)  $-manifold of vanishing
torsion tensors, the CR Bochner formulae (\ref{2015}) is equivalent to the
curvature-dimension inequality (\ref{2016A}) which also observed in the paper
of \cite{bg}.
\end{remark}

As for the curvature-dimension inequality in a closed pseudohermitian $\left(
2n+1\right)  $-manifold of nonvanishing torsion tensors, we have

\begin{lemma}
\label{l2} Let $(M,J,\theta)$ be a closed pseudohermitian $\left(
2n+1\right)  $-manifold of
\[
\left(  2Ric-\left(  n-2\right)  Tor\right)  \left(  Z,Z\right)
\geq-k\left\langle Z,Z\right\rangle
\]
for $Z\in\Gamma\left(  T_{1,0}M\right)  $, $k\geq0$ and%
\[
\max_{i,j\in I_{n}}\left\vert A_{ij}\right\vert \leq\overline{A}%
,\ \ \max_{i,j\in I_{n}}\left\vert A_{ij,\overline{i}}\right\vert
\leq\overline{B}%
\]
for nonnegative constants $\overline{A},\ \overline{B}$, Then $M$ satisfies
the curvature-dimension inequality $CD(-k-2nN\varepsilon_{1}\overline{B}%
^{2},\frac{2n}{m}-\frac{2n^{2}N}{\varepsilon_{1}}-\frac{2mn^{2}N^{2}%
\overline{A}^{2}}{m-1},4,2mn)$ for $1<m<+\infty$, $0<\varepsilon_{1}<+\infty$
and smaller $N>0$ such that
\[
\left(  \frac{2n}{m}-\frac{2n^{2}N}{\varepsilon_{1}}-\frac{2mn^{2}%
N^{2}\overline{A}^{2}}{m-1}\right)  >0
\]
and $0<\nu\leq N$.
\end{lemma}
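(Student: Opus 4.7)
The plan is to follow the same template as the proof of Lemma \ref{l1}, but now the commutator $[\Delta_b,T]f$ in the expression for $\Gamma_2^Z(f,f)$ no longer vanishes and must be controlled by the torsion bounds $\overline{A},\overline{B}$ via Cauchy--Schwarz, at the cost of shifting both the $\rho_1$ slot and the $\rho_2$ slot in the curvature-dimension inequality.

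First, exactly as in (\ref{3}), the CR Bochner formula (\ref{2015}) gives
\[
\Gamma_2(f,f)+\nu\Gamma_2^Z(f,f)=4|\mathrm{Hess}(f)|^2+4(2Ric-(n-2)Tor)((\nabla_b f)_c,(\nabla_b f)_c)+8\langle J\nabla_b f,\nabla_b f_0\rangle+2\nu|\nabla_b f_0|^2+2\nu f_0[\Delta_b,T]f.
\]
By the commutation relation (\ref{4a}) we expand
\[
[\Delta_b,T]f=2\bigl[A_{\alpha\beta,}{}^{\beta}f^{\alpha}+A_{\alpha\beta}f^{\alpha,\beta}+\text{c.c.}\bigr],
\]
so the new burden compared with Lemma \ref{l1} is the pair of terms $\nu f_0\,A_{\alpha\beta,}{}^{\beta}f^{\alpha}$ and $\nu f_0\,A_{\alpha\beta}f^{\alpha,\beta}$ (and their conjugates).

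Next I would dispose of each new term with a Young's inequality chosen to push one piece into $|f_0|^2$ (feeding $\rho_2$) and the other into either $|\nabla_b f|^2$ (feeding $\rho_1$) or $|\mathrm{Hess}(f)|^2$ (absorbed by a fraction of the leading Hessian term). Concretely, I would write
\[
2\nu|f_0\,A_{\alpha\beta,}{}^{\beta}f^{\alpha}|\leq \tfrac{\nu n}{\varepsilon_1}|f_0|^2+\nu\varepsilon_1\,|A_{\alpha\beta,}{}^{\beta}|^2|f^{\alpha}|^2,
\]
and use $|A_{\alpha\beta,}{}^{\beta}|\le n\overline{B}$ to obtain a contribution $2nN\varepsilon_1\overline{B}^2|\nabla_b f|^2$, which is exactly the shift in the $\rho_1$ parameter. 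For the Hessian-type term I would pick the Young's constant dual to $\tfrac{m-1}{m}$, writing
\[
2\nu|f_0\,A_{\alpha\beta}f^{\alpha,\beta}|\leq \tfrac{m-1}{m}|f^{\alpha,\beta}|^2+\tfrac{m\nu^2\overline{A}^2}{m-1}|f_0|^2,
\]
so that $\tfrac{m-1}{m}|f^{\alpha,\beta}|^2$ is absorbed into the $\tfrac{m-1}{m}$ portion of $4|\mathrm{Hess}(f)|^2$, while the remaining $\tfrac{1}{m}\cdot 4|\mathrm{Hess}(f)|^2$ is fed into the sharp bound (\ref{18}) to produce the dimensional term $\tfrac{2}{mn}|\Delta_b f|^2+\tfrac{2n}{m}|f_0|^2$ — this is where the $2mn$ in the dimension parameter enters.

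Finally, I would handle $8\langle J\nabla_b f,\nabla_b f_0\rangle$ exactly as in (\ref{4}) to retain the $\kappa=4$ slot, apply the curvature hypothesis $(2Ric-(n-2)Tor)\ge -k$ on the Hermitian part, and collect all contributions. The positivity requirements $\tfrac{2n}{m}-\tfrac{2n^2N}{\varepsilon_1}-\tfrac{2mn^2N^2\overline{A}^2}{m-1}>0$ and $0<\nu\le N$ arise precisely so that the coefficient of $|f_0|^2$ remains strictly positive after absorbing the $\tfrac{m\nu^2\overline{A}^2}{m-1}$ and $\tfrac{\nu n}{\varepsilon_1}$ deficits. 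The main obstacle I anticipate is not conceptual but bookkeeping: tracking the four competing Young's inequality parameters ($m$, $\varepsilon_1$, the Cauchy--Schwarz weight $\nu$ for the $J\nabla_b f$ term, and the partition between $\tfrac{1}{m}$ and $\tfrac{m-1}{m}$ of $|\mathrm{Hess}|^2$) so that each torsion-induced error lands in its intended slot and the dimensional coefficient $\tfrac{2}{mn}|\Delta_b f|^2$ is preserved unshifted.
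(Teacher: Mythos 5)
Your plan reproduces the paper's own argument: expand $[\Delta_b,T]f$ via the commutation relation (\ref{4a}), split the resulting torsion terms with two Young's inequalities (one sending $\overline{B}^2|\nabla_b f|^2$ into the $\rho_1$ slot, the other a weight dual to $\tfrac{m-1}{m}$ that sacrifices a $\tfrac{m-1}{m}$ fraction of the $|f_{\alpha\beta}|^2$ portion of the Hessian and dumps $\tfrac{mN^2\overline{A}^2}{m-1}|f_0|^2$ into the $\rho_2$ slot), feed the surviving $\tfrac{1}{m}\cdot 4|\mathrm{Hess}(f)|^2$ into (\ref{18}), and handle $\langle J\nabla_b f,\nabla_b f_0\rangle$ as in (\ref{4}) to preserve $\kappa=4$. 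This is precisely what the paper does (with its explicit choice $\varepsilon_2=\tfrac{m-1}{mN\overline{A}^2}$ in place of your ``dual constant'' phrasing), so the proposal is correct and matches the paper's proof.
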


\begin{proof}
It follows from (\ref{3}), (\ref{4}) and (\ref{4a}) that
\[%
\begin{array}
[c]{ccl}%
\Gamma_{2}(f,\ f)+\nu\Gamma_{2}^{Z}(f,\ f) & \geq & 8\left[
{\displaystyle\sum\limits_{\alpha,\beta}}
\left(  \left\vert f_{\alpha\beta}\right\vert ^{2}+\left\vert f_{\alpha
\overline{\beta}}\right\vert ^{2}\right)  \right]  -\left(  2k+\frac{8}{\nu
}\right)  \left\vert \nabla_{b}f\right\vert ^{2}\\
&  & -8\nu\left\vert f_{0}\right\vert
{\displaystyle\sum\limits_{\alpha,\beta}}
\left\vert (A_{\overline{\alpha}\overline{\beta},\alpha}f_{\beta}%
+A_{\overline{\alpha}\overline{\beta}}f_{\beta\alpha})\right\vert .
\end{array}
\]
Note that by using the Young inequality%
\begin{equation}
\left\vert f_{0}\right\vert \left(  \left\vert A_{\overline{\alpha}%
\overline{\beta},\alpha}f_{\beta}\right\vert +\left\vert A_{\overline{\alpha
}\overline{\beta}}f_{\beta\alpha}\right\vert \right)  \leq\frac{\left\vert
f_{0}\right\vert ^{2}}{4\varepsilon_{1}}+\varepsilon_{1}\left\vert
A_{\overline{\alpha}\overline{\beta,}\alpha}f_{\beta}\right\vert ^{2}%
+\frac{\left\vert f_{0}\right\vert ^{2}}{4\varepsilon_{2}}+\varepsilon
_{2}\left\vert A_{\overline{\alpha}\overline{\beta}}f_{\beta\alpha}\right\vert
^{2},\label{19}%
\end{equation}
for $\varepsilon_{1},\varepsilon_{2}>0$. Choose
\[
\varepsilon_{2}=\frac{m-1}{mN\overline{A}^{2}}%
\]
for $m>1$ and $N$ with $\nu\leq N$. This implies that $\left(  1-N\varepsilon
_{2}\overline{A}^{2}\right)  =\frac{1}{m}$.

It follows from (\ref{18}) that
\[%
\begin{array}
[c]{ccl}%
\Gamma_{2}(f,\ f)+\nu\Gamma_{2}^{Z}(f,\ f) & \geq & 8%
{\displaystyle\sum\limits_{\alpha,\beta}}
\left\vert f_{\alpha\overline{\beta}}\right\vert ^{2}+8%
{\displaystyle\sum\limits_{\alpha,\beta}}
\left(  1-\nu\varepsilon_{2}\left\vert A_{\overline{\alpha}\overline{\beta}%
}\right\vert ^{2}\right)  \left\vert f_{\beta\alpha}\right\vert ^{2}-\left(
2k+\frac{8}{\nu}\right)  \left\vert \nabla_{b}f\right\vert ^{2}\\
&  & -2\nu%
{\displaystyle\sum\limits_{\alpha,\beta}}
\left(  \frac{1}{\varepsilon_{1}}+\frac{1}{\varepsilon_{2}}\right)  \left\vert
f_{0}\right\vert ^{2}-8\nu\varepsilon_{1}%
{\displaystyle\sum\limits_{\alpha,\beta}}
\left\vert A_{\overline{\alpha}\overline{\beta},\alpha}f_{\beta}\right\vert
^{2}\\
& \geq & \frac{8}{m}%
{\displaystyle\sum\limits_{\alpha,\beta}}
\left(  \left\vert f_{\alpha\overline{\beta}}\right\vert ^{2}+\left\vert
f_{\beta\alpha}\right\vert ^{2}\right)  -\left(  2k+\frac{8}{\nu
}+4N\varepsilon_{1}n\overline{B}^{2}\right)  \left\vert \nabla_{b}f\right\vert
^{2}\\
&  & -2n^{2}N\left(  \frac{1}{\varepsilon_{1}}+\frac{1}{\varepsilon_{2}%
}\right)  \left\vert f_{0}\right\vert ^{2}\\
& \geq & \frac{4}{m}\left(  \frac{1}{2n}\left\vert \Delta_{b}f\right\vert
^{2}+\frac{n}{2}\left\vert f_{0}\right\vert ^{2}\right)  +\left(  2k+\frac
{8}{\nu}+4N\varepsilon_{1}n\overline{B}^{2}\right)  \left\vert \nabla
_{b}f\right\vert ^{2}\\
&  & -2n^{2}N\left(  \frac{1}{\varepsilon_{1}}+\frac{1}{\varepsilon_{2}%
}\right)  \left\vert f_{0}\right\vert ^{2}\\
& \geq & \frac{1}{2mn}\left(  Lf\right)  ^{2}+\left(  -k-2nN\varepsilon
_{1}\overline{B}^{2}-\frac{4}{\nu}\right)  \Gamma\left(  f,f\right) \\
&  & +\left(  \frac{2n}{m}-\frac{2n^{2}N}{\varepsilon_{1}}-\frac{2mn^{2}%
N^{2}\overline{A}^{2}}{m-1}\right)  \Gamma^{Z}\left(  f,f\right)  .
\end{array}
\]
Now we make $N$ smaller such that
\[
\frac{2n}{m}-\frac{2n^{2}N}{\varepsilon_{1}}-\frac{2mn^{2}N^{2}\overline
{A}^{2}}{m-1}>0.
\]

Then we are done.
\end{proof}

\begin{remark}
\bigskip By choosing $\overline{A}=0=\overline{B},$ $m\rightarrow
1^{+},\varepsilon_{1}\rightarrow+\infty$ and noting the inequality $\left(
\ref{19}\right)  $ in Lemma \ref{l2}, we are also able to have the same
conclusion in Lemma \ref{l1}.
\end{remark}

\section{The CR Li-Yau gradient estimate}

In this section, based on methods of \cite{cy} and \cite{ckl}, we first derive
the CR Li-Yau gradient estimate and the Harnack inequality for the CR heat
equation in a closed pseudohermitian manifold. Let $(M,J,\theta)$ be a closed
pseudohermitian $(2n+1)$-manifold and $u(x,t)$ be a positive solution of the
CR heat equation
\[
\left(  \Delta_{b}-\frac{\partial}{\partial t}\right)  u\left(  x,t\right)  =0
\]
on $M\times\left[  0,\text{ }\infty\right)  $. \ We denote that $f(x,t)=\ln
u(x,t)$. Modified by \cite{ckl}, we define a real-valued function
$F(x,t,\beta,\delta):M\times0,$\thinspace$T)\times\mathbf{R}^{+}%
\times\mathbf{R}^{+}\rightarrow\mathbf{R}$ by
\begin{equation}%
\begin{array}
[c]{c}%
F(x,t,\beta,\delta)=t\left[
{\displaystyle\sum\limits_{j\in I_{d}}}
\left\vert e_{j}f\right\vert ^{2}+\beta t%
{\displaystyle\sum\limits_{\alpha\in I_{h}}}
\left\vert Y_{\alpha}f\right\vert ^{2}-\delta f_{t}\right]
\end{array}
\label{2016}%
\end{equation}
for $x\in M,\ t\geq0,\ \beta>0,\ \delta>0$. Note that $\beta\rightarrow0^{+}$
if $T\rightarrow\infty$ as in the proof.

\begin{lemma}
\label{l3} Let $(M,J,\theta)$ be a closed pseudohermitian $(2n+1)$-manifold
and $u(x,t)$ be a positive solution of the CR heat equation
\[
\left(  L-\frac{\partial}{\partial t}\right)  u\left(  x,t\right)  =0
\]
on $M\times\left[  0,\text{ }\infty\right)  $. \ We have the identity%
\begin{equation}%
\begin{array}
[c]{ccl}%
(L-\frac{\partial}{\partial t})F & = & -\frac{F}{t}+2t[\Gamma_{2}(f,\ f)+\beta
t\Gamma_{2}^{Z}(f,\ f)]\\
&  & +4\beta t^{2}%
{\displaystyle\sum\limits_{j\in I_{d},\alpha\in I_{h}}}
(e_{j}f)(Y_{\alpha}f)([e_{j},Y_{\alpha}]f)\\
&  & -2%
{\displaystyle\sum\limits_{j\in I_{d}}}
(e_{j}f)(e_{j}F)-\beta t%
{\displaystyle\sum\limits_{\alpha\in I_{h}}}
\left\vert Y_{\alpha}f\right\vert ^{2}.
\end{array}
\label{5}%
\end{equation}

\begin{proof}
It follows from definitions of $\Gamma_{2}(f,\ f)$ and $\Gamma_{2}^{Z}(f,\ f)$
that
\[%
\begin{array}
[c]{ccl}%
LF & = & t\left[  L(\Gamma(f,\ f))+\beta tL(\Gamma^{Z}(f,\ f))-\delta
Lf_{t}\right] \\
& = & t\{[2\Gamma_{2}(f,\ f)+2%
{\displaystyle\sum\limits_{j\in I_{d}}}
\left(  e_{j}f\right)  \left(  e_{j}Lf\right)  ]\\
&  & +\beta t[2\Gamma_{2}^{Z}(f,\ f)+2%
{\displaystyle\sum\limits_{\alpha\in I_{h}}}
\left(  Y_{\alpha}f\right)  \left(  Y_{\alpha}Lf\right)  ]-\delta Lf_{t}\}.
\end{array}
\]
Then
\begin{equation}%
\begin{array}
[c]{ccl}%
(L-\frac{\partial}{\partial t})F & = & -\frac{F}{t}+t[2\Gamma_{2}%
(f,\ f)+2\beta t\Gamma_{2}^{Z}(f,\ f)+2%
{\displaystyle\sum\limits_{j}}
\left(  e_{j}f\right)  e_{j}\left(  L-\frac{\partial}{\partial t}\right)  f\\
&  & +2\beta t%
{\displaystyle\sum\limits_{\alpha}}
\left(  Y_{\alpha}f\right)  Y_{\alpha}\left(  L-\frac{\partial}{\partial
t}\right)  f-\beta%
{\displaystyle\sum\limits_{\alpha}}
\left(  Y_{\alpha}f\right)  ^{2}-\delta\frac{\partial}{\partial t}\left(
L-\frac{\partial}{\partial t}\right)  f].
\end{array}
\label{5a}%
\end{equation}
Since
\[
(L-\frac{\partial}{\partial t})f=-%
{\displaystyle\sum\limits_{j}}
\left\vert e_{j}f\right\vert ^{2}=-\frac{F}{t}+\beta t%
{\displaystyle\sum\limits_{\alpha}}
\left\vert Y_{\alpha}f\right\vert ^{2}-\delta f_{t},
\]
we obtain%

\begin{subequations}
\begin{equation}%
\begin{array}
[c]{l}%
2%
{\displaystyle\sum\limits_{j}}
\left(  e_{j}f\right)  e_{j}\left(  L-\frac{\partial}{\partial t}\right)
f+2\beta t%
{\displaystyle\sum\limits_{\alpha}}
\left(  Y_{\alpha}f\right)  Y_{\alpha}\left(  L-\frac{\partial}{\partial
t}\right)  f\\
-\beta%
{\displaystyle\sum\limits_{\alpha}}
\left(  Y_{\alpha}f\right)  ^{2}-\delta\frac{\partial}{\partial t}\left(
L-\frac{\partial}{\partial t}\right)  f\\
=2%
{\displaystyle\sum\limits_{j}}
\left(  e_{j}f\right)  e_{j}\left(  -\frac{F}{t}+\beta t%
{\displaystyle\sum\limits_{\alpha}}
\left\vert Y_{\alpha}f\right\vert ^{2}-\delta f_{t}\right) \\
+2\beta t%
{\displaystyle\sum\limits_{\alpha}}
\left(  Y_{\alpha}f\right)  Y_{\alpha}\left(  -%
{\displaystyle\sum\limits_{j}}
\left\vert e_{j}f\right\vert ^{2}\right)  -\beta%
{\displaystyle\sum\limits_{\alpha}}
\left(  Y_{\alpha}f\right)  ^{2}-\delta\frac{\partial}{\partial t}\left(
L-\frac{\partial}{\partial t}\right)  f\\
=2\beta t\left[
{\displaystyle\sum\limits_{j}}
\left(  e_{j}f\right)  e_{j}\left(
{\displaystyle\sum\limits_{\alpha}}
\left\vert Y_{\alpha}f\right\vert ^{2}\right)  +%
{\displaystyle\sum\limits_{\alpha}}
\left(  Y_{\alpha}f\right)  Y_{\alpha}\left(  -%
{\displaystyle\sum\limits_{j}}
\left\vert e_{j}f\right\vert ^{2}\right)  \right] \\
+2%
{\displaystyle\sum\limits_{j}}
\left(  e_{j}f\right)  e_{j}\left(  -\frac{F}{t}-\delta f_{t}\right)  -\beta%
{\displaystyle\sum\limits_{\alpha}}
\left(  Y_{\alpha}f\right)  ^{2}-\delta\frac{\partial}{\partial t}\left(  -%
{\displaystyle\sum\limits_{j}}
\left\vert e_{j}f\right\vert ^{2}\right) \\
=4\beta t%
{\displaystyle\sum\limits_{j,\alpha}}
(e_{j}f)(Y_{\alpha}f)([e_{j},Y_{\alpha}]f)-\frac{2}{t}%
{\displaystyle\sum\limits_{j}}
(e_{j}f)(e_{j}F)-\beta%
{\displaystyle\sum\limits_{\alpha}}
\left\vert Y_{\alpha}f\right\vert ^{2}\text{.}%
\end{array}
\label{5b}%
\end{equation}
Substitute (\ref{5b}) into (\ref{5a}), we have the identity (\ref{5}).
\end{subequations}
\end{proof}
\end{lemma}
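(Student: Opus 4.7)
The plan is a direct computation: expand $LF$ and $\partial_t F$ separately, subtract, and then use the fact that $f=\ln u$ solves a modified heat equation to recognize the right-hand side. All manipulations are at the level of first-order operators acting on scalars, so no geometry beyond the bracket structure of the $\{e_j,Y_\alpha\}$ enters.

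\noindent\textbf{Step 1 (Compute $LF$).} Since $t$ is a scalar parameter, $L$ commutes with multiplication by $t$. By the very definitions of $\Gamma_{2}$ and $\Gamma_{2}^{Z}$ given in the definition of $CD(\rho_{1},\rho_{2},\kappa,m)$, I can write
\[
L(\Gamma(f,f))=2\Gamma_{2}(f,f)+2\sum_{j\in I_{d}}(e_{j}f)(e_{j}Lf),\qquad L(\Gamma^{Z}(f,f))=2\Gamma_{2}^{Z}(f,f)+2\sum_{\alpha\in I_{h}}(Y_{\alpha}f)(Y_{\alpha}Lf).
\]
Multiplying by $t$ and $\beta t^{2}$ respectively and adding $-\delta t\,Lf_{t}$ gives an expression for $LF$ in which every appearance of $L$ has been pushed out to either $Lf$ or $Lf_{t}$.

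\noindent\textbf{Step 2 (Compute $\partial_t F$ and subtract).} Differentiating (\ref{2016}) in $t$ produces $F/t$ plus $\beta t\,\Gamma^{Z}(f,f)$ plus $t$ times $\partial_{t}$ of the bracket in (\ref{2016}). Subtracting from the expression in Step 1 gives $(L-\partial_{t})F=-F/t+2t\Gamma_{2}(f,f)+2\beta t^{2}\Gamma_{2}^{Z}(f,f)-\beta t\,\Gamma^{Z}(f,f)$ plus the ``cross terms''
\[
2t\sum_{j}(e_{j}f)e_{j}(L-\partial_{t})f+2\beta t^{2}\sum_{\alpha}(Y_{\alpha}f)Y_{\alpha}(L-\partial_{t})f-\delta t\,\partial_{t}(L-\partial_{t})f.
\]

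\noindent\textbf{Step 3 (Use the heat equation for $f=\ln u$).} Since $(L-\partial_{t})u=0$ and $L$ is the sum of squares $\sum e_{j}^{2}$, a direct calculation gives $(L-\partial_{t})f=-\sum_{j}|e_{j}f|^{2}=-\Gamma(f,f)$. Substituting $-\Gamma(f,f)=-F/t+\beta t\,\Gamma^{Z}(f,f)-\delta f_{t}$ into the cross terms lets me replace $(L-\partial_{t})f$ inside the first two sums, and annihilates the $\delta$-term (being $\partial_{t}$ of a $t$-independent identity, modulo the explicit $t$-dependence in $F$). Collecting the $-F/t-\delta f_{t}$ piece reconstructs $-2\sum_{j}(e_{j}f)(e_{j}F)$ exactly.

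\noindent\textbf{Step 4 (Identify the commutator term).} What remains is
\[
2\beta t\sum_{j,\alpha}(e_{j}f)\,e_{j}(|Y_{\alpha}f|^{2})-2\beta t\sum_{j,\alpha}(Y_{\alpha}f)\,Y_{\alpha}(|e_{j}f|^{2}),
\]
and applying $e_{j}(|Y_{\alpha}f|^{2})=2(Y_{\alpha}f)(e_{j}Y_{\alpha}f)$ and $Y_{\alpha}(|e_{j}f|^{2})=2(e_{j}f)(Y_{\alpha}e_{j}f)$ factors out $4\beta t(e_{j}f)(Y_{\alpha}f)$ from every summand, leaving precisely $(e_{j}Y_{\alpha}f-Y_{\alpha}e_{j}f)=[e_{j},Y_{\alpha}]f$. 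After multiplying through by the remaining factor of $t$ sitting in front of the cross terms, this yields the asserted $4\beta t^{2}$ coefficient.

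\noindent\textbf{Main obstacle.} Nothing is deep; the only real risk is bookkeeping. The potentially confusing step is Step 3, where several copies of $\Gamma(f,f)$ and $\beta t\,\Gamma^{Z}(f,f)$ appear with opposite signs and must be arranged so that (i) $-\beta t\,\Gamma^{Z}(f,f)$ survives as the last term of (\ref{5}), (ii) no stray $\Gamma(f,f)$ is left over, and (iii) the $\delta f_{t}$ piece recombines with $F/t$ inside $e_{j}F$. Keeping these signs straight is the only point where one can go wrong.
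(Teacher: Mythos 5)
Your proposal is correct and follows essentially the same route as the paper: expand $LF$ via the definitions of $\Gamma_2$ and $\Gamma_2^Z$, subtract $\partial_t F$, substitute $(L-\partial_t)f=-\Gamma(f,f)=-F/t+\beta t\,\Gamma^Z(f,f)-\delta f_t$ into the cross terms, and recognize the commutator. The only imprecision is the wording in Step 3 — the $-\delta f_t$ piece does not help reconstruct $-2\sum_j(e_jf)(e_jF)$ (that comes from $-F/t$ alone); rather it produces $-2\delta t\sum_j(e_jf)(e_jf_t)$, which cancels against $-\delta t\,\partial_t(L-\partial_t)f=+2\delta t\sum_j(e_jf)(e_jf_t)$ — but this is a descriptive slip, not a gap, and your Step 4 computation is exactly the paper's.
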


\bigskip As a consequence of the identity (\ref{5}), we have proposition 4.2.

\begin{proposition}
If $M$ satisfies the curvature-dimension inequality $CD(\rho_{1},\rho
_{2},\kappa,m)$ for $\rho_{1}\in%
\mathbb{R}
,\rho_{2}>0,\kappa\geq0,m>0$, then
\begin{equation}%
\begin{array}
[c]{ccl}%
(L-\frac{\partial}{\partial t})F & \geq & -\frac{F}{t}+2t\left[  \frac{1}%
{m}(Lf)^{2}+(\rho_{1}-\frac{\kappa}{\beta t})\Gamma(f,f)+\rho_{2}\Gamma
^{Z}(f,f)\right]  +\\
&  & +4\beta t^{2}%
{\displaystyle\sum\limits_{j\in I_{d},\alpha\in I_{l}}}
(e_{j}f)(Y_{\alpha}f)([e_{j},Y_{\alpha}]f)\\
&  & -2%
{\displaystyle\sum\limits_{j\in I_{d}}}
(e_{j}f)(e_{j}F)-\beta t%
{\displaystyle\sum\limits_{\alpha\in I_{h}}}
\left\vert Y_{\alpha}f\right\vert ^{2}.
\end{array}
\label{7}%
\end{equation}

\end{proposition}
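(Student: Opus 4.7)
The plan is essentially a one-line substitution: start from the identity \eqref{5} established in Lemma \ref{l3} and apply the generalized curvature-dimension inequality \eqref{2016A} to the bracket $\Gamma_{2}(f,f)+\beta t\,\Gamma_{2}^{Z}(f,f)$ with a carefully chosen value of the free parameter $\nu$.

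First I would rewrite \eqref{5} as
\[
(L-\tfrac{\partial}{\partial t})F \;=\; -\frac{F}{t} \;+\; 2t\bigl[\Gamma_{2}(f,f)+\beta t\,\Gamma_{2}^{Z}(f,f)\bigr] \;+\; \mathcal{R},
\]
where $\mathcal{R}$ collects the three remaining terms (the commutator term, the drift term $-2\sum(e_{j}f)(e_{j}F)$, and $-\beta t\sum|Y_{\alpha}f|^{2}$), all of which already appear verbatim in the target inequality \eqref{7}. Thus $\mathcal{R}$ requires no modification and the problem reduces to estimating the bracketed quantity from below.

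Next, since $\beta>0$ and $t>0$, the value $\nu=\beta t$ is an admissible choice in \eqref{2016A}. Applying the $CD(\rho_{1},\rho_{2},\kappa,m)$ hypothesis with this choice gives
\[
\Gamma_{2}(f,f)+\beta t\,\Gamma_{2}^{Z}(f,f) \;\geq\; \frac{1}{m}(Lf)^{2}+\Bigl(\rho_{1}-\frac{\kappa}{\beta t}\Bigr)\Gamma(f,f)+\rho_{2}\,\Gamma^{Z}(f,f).
\]
Multiplying this inequality through by $2t$ and substituting back into the expression for $(L-\tfrac{\partial}{\partial t})F$ above yields \eqref{7} directly.

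There is no substantial obstacle here; the only thing one must notice is that the coefficient of $\Gamma_{2}^{Z}$ in the identity \eqref{5} is precisely $2t\cdot(\beta t)$, so matching $\nu=\beta t$ is forced if one wants to consume the bracket in a single application of \eqref{2016A}. Any other choice of $\nu$ would leave an unwanted residual of $\Gamma_{2}^{Z}$ or $\Gamma_{2}$ that could not be absorbed into $\mathcal{R}$. Once this matching is observed, the proposition follows by a direct substitution with no further estimation.
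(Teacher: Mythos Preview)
Your argument is correct and matches the paper's own reasoning: the paper states Proposition 4.2 immediately after the identity \eqref{5} as a direct consequence, without writing out a separate proof, precisely because one simply applies the $CD(\rho_{1},\rho_{2},\kappa,m)$ inequality with $\nu=\beta t$ to the bracket $\Gamma_{2}(f,f)+\beta t\,\Gamma_{2}^{Z}(f,f)$ and leaves the remaining terms untouched.
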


\bigskip

Now we proceed to prove\textbf{\ Theorem \ref{t1} : }

\begin{proof}
Note that $M$ satisfies the curvature-dimension inequality $CD(\rho_{1}%
,\rho_{2},\kappa,m)$ with $\rho_{1}<0,\rho_{2}>0,\kappa\geq0,m>0$ as in Lemma
\ref{l2}. Here we follow the method as in (\cite{cy}). Set%
\[
\left\{
\begin{array}
[c]{l}%
x=\left(  \delta_{0}\left\vert \nabla_{b}f\right\vert ^{2}-\delta
f_{t}\right)  \left(  x_{0},t_{0}\right)  \text{ for }\delta>\delta_{0}>2\\
\overline{x}=\left\vert \nabla_{b}f\right\vert ^{2}\left(  x_{0},t_{0}\right)
\\
y=\left\vert f_{0}\right\vert \left(  x_{0},t_{0}\right)
\end{array}
\right.
\]
where $\delta_{0}$ and $\left(  x_{0},t_{0}\right)  $ will be chosen later and
$f_{0}=Tf$ with $T:=Y_{\alpha}.$ From now on, $T$ denotes a positive real
number instead of a vector field.

If $F$ attains its maximum at $\left(  x_{0},t_{0}\right)  \in M\times
\lbrack0,T]$, then, by choosing a normal coordinate at $\left(  x_{0}%
,t_{0}\right)  $ and $\left(  \ref{6}\right)  $, $\left(  \ref{7}\right)  $
becomes
\begin{equation}
0\geq-\frac{F}{t_{0}}+2t_{0}\left[  \frac{1}{m}(2\Delta_{b}f)^{2}+2(\rho
_{1}-\frac{\kappa}{\beta t_{0}})\overline{x}+\rho_{2}y^{2}\right]  -16n\beta
t_{0}^{2}\overline{A}\overline{x}y-\beta t_{0}y^{2}\label{16}%
\end{equation}
for $d=2n$ and $h=1$. More precisely from the commutation relations $\left(
\ref{6}\right)  $, we have at $\left(  x_{0},t_{0}\right)  $
\[%
\begin{array}
[c]{cl}
&
{\displaystyle\sum\limits_{\alpha\in I_{2n}}}
\left(  e_{\alpha}f\right)  \left(  Tf\right)  \left(  \left[  e_{\alpha
},T\right]  f\right)  \left(  x_{0},t_{0}\right) \\
= & f_{0}%
{\displaystyle\sum\limits_{\alpha\in I_{2n}}}
f_{e_{\alpha}}\left(  e_{\alpha}Tf-Te_{\alpha}f\right) \\
= & f_{0}%
{\displaystyle\sum\limits_{\alpha\in I_{2n}}}
f_{e_{\alpha}}\left[  \left(  f_{0e_{\alpha}}+\left(  D_{e_{\alpha}}T\right)
f\right)  -\left(  f_{e_{\alpha}0}+\left(  D_{T}e_{\alpha}\right)  f\right)
\right] \\
= & f_{0}\left[
{\displaystyle\sum\limits_{j\in I_{n}}}
f_{e_{j}}\left(  f_{0e_{j}}-f_{e_{j}0}\right)  +%
{\displaystyle\sum\limits_{j\in I_{n}}}
f_{e_{\widetilde{j}}}\left(  f_{0e_{_{\widetilde{j}}}}-f_{e_{_{\widetilde{j}}%
}0}\right)  -%
{\displaystyle\sum\limits_{\alpha,\beta\in I_{2n}}}
\Gamma_{0e_{\alpha}}^{e_{\beta}}f_{e_{\alpha}}f_{e_{\beta}}\right] \\
= & f_{0}%
{\displaystyle\sum\limits_{j,l\in I_{n}}}
f_{e_{j}}\left(  f_{e_{l}}\operatorname{Re}A_{jl}-f_{e_{\widetilde{l}}%
}\operatorname{Im}A_{jl}\right) \\
& +f_{0}%
{\displaystyle\sum\limits_{j,l\in I_{n}}}
f_{e\widetilde{_{j}}}\left(  -f_{e_{l}}\operatorname{Im}A_{jl}%
-f_{e_{\widetilde{l}}}\operatorname{Re}A_{jl}\right)  -f_{0}%
{\displaystyle\sum\limits_{\alpha,\beta\in I_{2n}}}
\Gamma_{0e_{\alpha}}^{e_{\beta}}f_{e_{\alpha}}f_{e_{\beta}}\\
\geq & -\left\vert f_{0}\right\vert \overline{A}%
{\displaystyle\sum\limits_{j,l\in I_{n}}}
\left(  \left\vert f_{e_{j}}\right\vert +\left\vert f_{e\widetilde{_{j}}%
}\right\vert \right)  \left(  \left\vert f_{e_{l}}\right\vert +\left\vert
f_{e\widetilde{_{l}}}\right\vert \right) \\
\geq & -4n\overline{A}\overline{x}y.
\end{array}
\]
We divide the discussion into the following two cases :

$(I)$ $\ Case\ I$ $:x\geq0:$

By%
\[
(\Delta_{b}f)^{2}=\left(  f_{t}-\left\vert \nabla_{b}f\right\vert ^{2}\right)
^{2}=\left[  \frac{x}{\delta}+\left(  1-\frac{\delta_{0}}{\delta}\right)
\left\vert \nabla_{b}f\right\vert ^{2}\right]  ^{2}\geq\frac{x^{2}}{\delta
^{2}}+\frac{\left(  \delta-\delta_{0}\right)  ^{2}}{\delta^{2}}\left(
\left\vert \nabla_{b}f\right\vert ^{2}\right)  ^{2},
\]
we have%

\begin{equation}%
\begin{array}
[c]{ccl}%
0 & \geq & -\frac{F}{t_{0}}+\frac{8t_{0}}{m\delta^{2}}x^{2}+t_{0}\rho_{2}%
y^{2}\\
&  & +\frac{8t_{0}\left(  \delta-\delta_{0}\right)  ^{2}}{m\delta^{2}%
}\overline{x}^{2}+\left(  4t_{0}\rho_{1}-\frac{4\kappa}{\beta}\right)
\overline{x}\\
&  & +t_{0}\left(  \rho_{2}-\beta\right)  y^{2}-16n\beta t_{0}^{2}\overline
{A}\overline{x}y.
\end{array}
\label{15}%
\end{equation}
Let%
\[
\mathcal{A}:=t_{0}\left[  \frac{8\left(  \delta-\delta_{0}\right)  ^{2}%
}{m\delta^{2}}\overline{x}^{2}+\left(  4\rho_{1}-\frac{4\kappa}{\beta t_{0}%
}\right)  \overline{x}+\left(  \rho_{2}-\beta\right)  y^{2}-16n\beta
t_{0}\overline{A}\overline{x}y\right]
\]
and%
\[
A:=\frac{8\left(  \delta-\delta_{0}\right)  ^{2}}{m\delta^{2}}.
\]
We have
\[%
\begin{array}
[c]{ccl}%
\mathcal{A} & = & t_{0}\{A\left(  \overline{x}+\frac{2\rho_{1}-\frac{2\kappa
}{\beta t_{0}}-8n\beta t_{0}\overline{A}y}{A}\right)  ^{2}-\frac{\left(
2\rho_{1}-\frac{2\kappa}{\beta t_{0}}-8n\beta t_{0}\overline{A}y\right)  ^{2}%
}{A}+\left(  \rho_{2}-\beta\right)  y^{2}\}\\
& = & t_{0}\{A\left(  \overline{x}+\frac{2\rho_{1}-\frac{2\kappa}{\beta t_{0}%
}-8n\beta t_{0}\overline{A}y}{A}\right)  ^{2}+\left(  \rho_{2}-\beta
-\frac{64n^{2}\beta^{2}t_{0}^{2}\overline{A}^{2}}{A}\right)  y^{2}\\
&  & +\frac{32n\beta t_{0}\overline{A}}{A}\left(  \rho_{1}-\frac{\kappa}{\beta
t_{0}}\right)  y-4\frac{\left(  \rho_{1}-\frac{\kappa}{\beta t_{0}}\right)
^{2}}{A}\}.
\end{array}
\]
Choose
\[
\beta=\beta_{1}:=\min\{ \frac{\rho_{2}}{4},\frac{\sqrt{A\rho_{2}}%
}{16nT\overline{A}}\}.
\]
This implies that%
\[
B:=\left(  \rho_{2}-\beta-\frac{64n^{2}\beta^{2}t_{0}^{2}\overline{A}^{2}}%
{A}\right)  \geq\frac{\rho_{2}}{2}.
\]

$(i)$ Under the case
\[
T\geq T_{0}:=\frac{\sqrt{A\rho_{2}}}{4n\rho_{2}\overline{A}},
\]
we have
\[%
\begin{array}
[c]{ccl}%
\mathcal{A} & \geq & t_{0}\{By^{2}+\frac{32n\beta t_{0}\overline{A}}{A}\left(
\rho_{1}-\frac{\kappa}{\beta t_{0}}\right)  y-4\frac{\left(  \rho_{1}%
-\frac{\kappa}{\beta t_{0}}\right)  ^{2}}{A}\}\\
& = & t_{0}[B\left(  y+\frac{16n\beta t_{0}\overline{A}}{BA}\left(  \rho
_{1}-\frac{\kappa}{\beta t_{0}}\right)  \right)  ^{2}-\frac{16^{2}n^{2}%
\beta^{2}t_{0}^{2}\overline{A}^{2}+4BA}{BA^{2}}\left(  \rho_{1}-\frac{\kappa
}{\beta t_{0}}\right)  ^{2}\}\\
& \geq & -\frac{1}{A}\left(  \rho_{1}-\frac{16n\kappa\overline{A}}{\sqrt
{A\rho_{2}}}\frac{T}{t_{0}}\right)  ^{2}\left(  1+\frac{2t_{0}^{2}}{T^{2}%
}\right)  t_{0}\\
& \geq & -\frac{3}{A}\left(  \rho_{1}-\frac{16n\kappa\overline{A}}{\sqrt
{A\rho_{2}}}\frac{T}{t_{0}}\right)  ^{2}t_{0},
\end{array}
\]
Set%
\[
z=t_{0}x\text{.}%
\]

$(a)\ $If $x\geq\beta t_{0}\left\vert f_{0}\right\vert ^{2}$, \ it follows
from
\[
F=t_{0}\left[  \left(  2-\delta_{0}\right)  \left\vert \nabla_{b}f\right\vert
^{2}+x+\beta t_{0}\left\vert f_{0}\right\vert ^{2}\right]
\]
that $\left(  \ref{15}\right)  $ becomes%
\[
0\geq-2t_{0}x+\frac{8}{m\delta^{2}}\left(  t_{0}x\right)  ^{2}-\frac{3}%
{A}\left(  \rho_{1}t_{0}-\frac{16n\kappa\overline{A}}{\sqrt{A\rho_{2}}%
}T\right)  ^{2}%
\]
and then
\[
\left(  z-\frac{m\delta^{2}}{8}\right)  ^{2}\leq\frac{m\delta^{2}}{8}\left(
\frac{m\delta^{2}}{8}+\frac{3}{A}\left(  \rho_{1}t_{0}-\frac{16n\kappa
\overline{A}}{\sqrt{A\rho_{2}}}T\right)  ^{2}\right)  .
\]
Thus
\[
z\leq\frac{m\delta^{2}}{4}+\frac{\delta}{4}\sqrt{\frac{6m}{A}}\left\vert
\left(  \rho_{1}t_{0}-\frac{16n\kappa\overline{A}}{\sqrt{A\rho_{2}}}T\right)
\right\vert .
\]
This implies
\[
F\leq2t_{0}x\leq\frac{m\delta^{2}}{2}+\frac{\delta}{2}\sqrt{\frac{6m}{A}%
}\left(  -\rho_{1}t_{0}+\frac{16n\kappa\overline{A}}{\sqrt{A\rho_{2}}%
}T\right)
\]
and then
\[
\lbrack2\left\vert \nabla_{b}f\right\vert ^{2}+\beta_{1}t\left\vert
f_{0}\right\vert ^{2}-\delta f_{t}]\left(  x,T\right)  \leq\frac
{C_{1}^{^{\prime}}}{T}+C_{2}^{^{\prime}}%
\]
with $C_{1}^{^{\prime}}:=\frac{m\delta^{2}}{2}$ and
\[
C_{2}^{^{\prime}}:=-\frac{\rho_{1}\delta}{2}\sqrt{\frac{6m}{A}}+\frac
{8n\delta\kappa\overline{A}\sqrt{6m}}{A\sqrt{\rho_{2}}}>0.
\]

$(b)$ If $x\leq\beta t_{0}\left\vert f_{0}\right\vert ^{2}$, it follows that
\[
0\geq-2\beta t_{0}y^{2}+t_{0}\rho_{2}y^{2}-\frac{3}{A}\left(  \rho_{1}%
-\frac{16n\kappa\overline{A}}{\sqrt{A\rho_{2}}}\frac{T}{t_{0}}\right)
^{2}t_{0}%
\]
and then
\[
y^{2}\leq\frac{1}{\left(  \rho_{2}-2\beta\right)  }\frac{3}{A}\left(  \rho
_{1}-\frac{16n\kappa\overline{A}}{\sqrt{A\rho_{2}}}\frac{T}{t_{0}}\right)
^{2}.
\]
Hence%
\[%
\begin{array}
[c]{ccl}%
F & \leq & 2\beta t_{0}^{2}y^{2}\\
& \leq & \frac{6\beta}{\left(  \rho_{2}-2\beta\right)  A}\left(  \rho_{1}%
t_{0}-\frac{16n\kappa\overline{A}}{\sqrt{A\rho_{2}}}T\right)  ^{2}\\
& \leq & \frac{3}{4n\overline{A}\sqrt{A\rho_{2}}}\left(  \rho_{1}\frac{t_{0}%
}{\sqrt{T}}-\frac{16n\kappa\overline{A}}{\sqrt{A\rho_{2}}}\sqrt{T}\right)
^{2}.
\end{array}
\]
Finally we have
\[
\lbrack2\left\vert \nabla_{b}f\right\vert ^{2}+\beta_{1}t\left\vert
f_{0}\right\vert ^{2}-\delta f_{t}]\left(  x,T\right)  \leq C_{2}%
^{^{\prime\prime}}\left(  \rho_{1},\rho_{2},\kappa,m,n,\delta,\overline
{A}\right)
\]
with
\[
C_{2}^{^{\prime\prime}}:=\frac{3}{4n\overline{A}\sqrt{A\rho_{2}}}\left(
-\rho_{1}+\frac{16n\kappa\overline{A}}{\sqrt{A\rho_{2}}}\right)  ^{2}.
\]

$(ii)\ $Under the case%
\[
T\leq T_{0}:=\frac{\sqrt{A\rho_{2}}}{4n\rho_{2}\overline{A}},
\]
we have%
\[
\beta_{1}=\frac{\rho_{2}}{4}%
\]
and then
\[
\mathcal{A\geq}-\frac{3}{A}\left(  \rho_{1}-\frac{4\kappa}{\rho_{2}t_{0}%
}\right)  ^{2}t_{0}.
\]

$(a)\ $If $x\geq\beta t_{0}\left\vert f_{0}\right\vert ^{2}$, \ then%
\[
\lbrack2\left\vert \nabla_{b}f\right\vert ^{2}+\beta_{1}t\left\vert
f_{0}\right\vert ^{2}-\delta f_{t}]\left(  x,T\right)  \leq\frac
{C_{1}^{^{\prime\prime}}}{T}+C_{2}^{^{\prime\prime\prime}}%
\]
with $C_{1}^{^{\prime\prime}}:=\frac{m\delta^{2}}{2}+\frac{2\delta\kappa}%
{\rho_{2}}\sqrt{\frac{6m}{A}}$ and
\[
C_{2}^{^{\prime\prime\prime}}:=-\frac{\delta\rho_{1}}{2}\sqrt{\frac{6m}{A}}>0.
\]

$(b)\ $If $x\leq\beta t_{0}\left\vert f_{0}\right\vert ^{2}$, then%
\[
\lbrack2\left\vert \nabla_{b}f\right\vert ^{2}+\beta_{1}t\left\vert
f_{0}\right\vert ^{2}-\delta f_{t}]\left(  x,T\right)  \leq\frac
{C_{1}^{^{\prime\prime\prime}}}{T}%
\]
with
\[
C_{1}^{^{\prime\prime\prime}}:=\frac{3}{A}\left(  \frac{\rho_{1}\sqrt
{A\rho_{2}}}{4n\rho_{2}\overline{A}}-\frac{4\kappa}{\rho_{2}}\right)  ^{2}.
\]

$(II)$ $Case\ II$ $:\ $ $x\leq0:$

We may assume
\[
\left(  \delta_{0}-2\right)  \left\vert \nabla_{b}f\right\vert ^{2}\leq\beta
t_{0}\left\vert f_{0}\right\vert ^{2}.
\]
Otherwise,
\[
F\leq0.
\]

From $\left(  \ref{16}\right)  $%
\begin{equation}
0\geq-\frac{F}{t_{0}}+2t_{0}\left[  2(\rho_{1}-\frac{\kappa}{\beta t_{0}%
})\frac{\beta t_{0}y^{2}}{\left(  \delta_{0}-2\right)  }+\rho_{2}y^{2}\right]
-16n\beta t_{0}^{2}\overline{A}\overline{x}y-\beta t_{0}y^{2}.\label{2016a}%
\end{equation}
Set
\[
\beta=\beta_{2}:=\min\left\{  \frac{\rho_{2}}{2},\frac{1}{n\overline{A}%
T},\frac{1}{T\left\Vert f_{0}\right\Vert _{M\times\left[  0,T\right]  }%
}\right\}  .
\]
Hence
\[%
\begin{array}
[c]{ccl}%
0 & \geq & -2\beta t_{0}y^{2}+2\rho_{2}t_{0}y^{2}+\rho_{1}\frac{4\beta
t_{0}^{2}}{\left(  \delta_{0}-2\right)  }y^{2}-\frac{4\kappa t_{0}}{\left(
\delta_{0}-2\right)  }y^{2}-16n\beta\overline{A}t_{0}^{2}y\frac{\beta
t_{0}y^{2}}{\left(  \delta_{0}-2\right)  }\\
& = & 2\left(  \rho_{2}-\beta\right)  t_{0}y^{2}+t_{0}y^{2}\left[  \rho
_{1}\frac{4\beta t_{0}}{\left(  \delta_{0}-2\right)  }-\frac{4\kappa}{\left(
\delta_{0}-2\right)  }-\frac{16n\beta^{2}\overline{A}t_{0}^{2}y}{\left(
\delta_{0}-2\right)  }\right] \\
& \geq & t_{0}y^{2}\left[  2\left(  \rho_{2}-\beta\right)  +\rho_{1}\frac
{4}{\left(  \delta_{0}-2\right)  \overline{A}}-\frac{4\kappa}{\left(
\delta_{0}-2\right)  }-\frac{16}{\left(  \delta_{0}-2\right)  }\right] \\
& \geq & t_{0}y^{2}\left[  \rho_{2}+\rho_{1}\frac{4}{\left(  \delta
_{0}-2\right)  \overline{A}}-\frac{4\kappa}{\left(  \delta_{0}-2\right)
}-\frac{16}{\left(  \delta_{0}-2\right)  }\right]  .
\end{array}
\]
Choose $\delta_{0}\left(  \rho_{1},\rho_{2},\kappa,\overline{A}\right)  >2$
such that
\[
\left(  \rho_{2}+\rho_{1}\frac{4}{\left(  \delta_{0}-2\right)  \overline{A}%
}-\frac{4\kappa}{\left(  \delta_{0}-2\right)  }-\frac{16}{\left(  \delta
_{0}-2\right)  }\right)  >0,
\]
we obtain
\[
y(x_{0},t_{0})=0.
\]
It follows that
\[
F(x_{0},t_{0})\leq0
\]
and then
\[
2\left\vert \nabla_{b}f\right\vert ^{2}+\beta_{2}t\left\vert f_{0}\right\vert
^{2}-\delta f_{t}\leq0
\]
on $M\times\lbrack0,T].$ So if we choose
\[
\beta\leq\min\left\{  \beta_{1},\beta_{2},\frac{1}{4\left(  n+1\right)
nT},\frac{1}{2\left(  n+1\right)  \overline{A}T}\right\}
\]

and
\[
m=n+1,\varepsilon_{1}=1,N=\beta T
\]

such that
\[
\frac{2n}{m}-\frac{2n^{2}N}{\varepsilon_{1}}-\frac{2n^{2}N^{2}\overline{A}%
^{2}m}{m-1}>0
\]
with $0<\nu\leq N$ as in \textit{Lemma} \ref{l2}, we obtain
\[
\lbrack\left\vert \nabla_{b}f\right\vert ^{2}-\frac{\delta}{2}f_{t}]\leq
\frac{C_{1}}{t}+C_{2}.
\]
Here%
\[%
\begin{array}
[c]{ccl}%
C_{1} & = & \frac{1}{2}\max\left\{  n\left(  n+1\right)  \delta^{2}%
+\frac{8\sqrt{3}\left(  n+1^{2}\right)  \delta^{2}}{\left(  \delta-\delta
_{0}\right)  },\frac{3n\left(  n+1\right)  \delta^{2}}{4\left(  \delta
-\delta_{0}\right)  ^{2}}\left[  \left(  k+\frac{\overline{B}^{2}}{2\left(
n+1\right)  }\right)  \frac{\left(  \delta-\delta_{0}\right)  }{2n\left(
n+1\right)  \overline{A}\delta}+\frac{16\left(  n+1\right)  }{n}\right]
^{2}\right\}  ,\\
C_{2} & = & \frac{1}{2}\max\{ \left(  k+\frac{\overline{B}^{2}}{2\left(
n+1\right)  }\right)  \frac{\sqrt{3}n\left(  n+1\right)  \delta^{2}}{2\left(
\delta-\delta_{0}\right)  }+16\sqrt{3}\left(  n+1\right)  ^{2}\frac{\delta
^{3}\overline{A}}{\left(  \delta-\delta_{0}\right)  ^{2}},\\
&  & \frac{3\left(  n+1\right)  \delta}{8n\overline{A}\left(  \delta
-\delta_{0}\right)  }\left(  k+\frac{\overline{B}^{2}}{2\left(  n+1\right)
}+\frac{32n\left(  n+1\right)  \delta\overline{A}}{\left(  \delta-\delta
_{0}\right)  }\right)  ^{2}\}.
\end{array}
\]
Note that $\beta\rightarrow0^{+}$ if $T\rightarrow\infty$ and
\[%
\begin{array}
[c]{ccc}%
\max\left\{  C_{1}^{^{\prime}},C_{1}^{^{\prime\prime}},C_{1}^{^{\prime
\prime\prime}}\right\}  & \leq & C_{1},\\
\max\left\{  C_{2}^{^{\prime}},C_{2}^{^{\prime\prime}},C_{2}^{^{\prime
\prime\prime}}\right\}  & \leq & C_{2}.
\end{array}
\]
These will complete the proof.
\end{proof}

\bigskip

The proof of \textbf{Theorem \ref{t2} : }

\begin{proof}
Define%
\[%
\begin{array}
[c]{ccc}%
\eta & : & \left[  t_{1},t_{2}\right]  \longrightarrow M\times\left[
t_{1},t_{2}\right] \\
&  & t\mapsto\left(  \gamma\left(  t\right)  ,t\right)
\end{array}
\]
where $\gamma$ is a horizontal curve with $\gamma\left(  t_{1}\right)  =x_{1}
$, $\gamma\left(  t_{2}\right)  =x_{2}$. Let $f=\ln u$, integrate $f^{\prime
}(t)$ along $\gamma$, so we get%
\[
f\left(  x_{1},t_{1}\right)  -f\left(  x_{2},t_{2}\right)  =-%
{\displaystyle\int\limits_{t_{1}}^{t_{2}}}
\left(  f\circ\eta\right)  ^{\prime}dt=-%
{\displaystyle\int\limits_{t_{1}}^{t_{2}}}
\left(  \left\langle \gamma^{\prime}\left(  t\right)  ,\nabla_{b}%
f\right\rangle +f_{t}\right)  dt.
\]
By applying Theorem \ref{t1}\textbf{, }this yields%

\[%
\begin{array}
[c]{ccl}%
f\left(  x_{1},t_{1}\right)  -f\left(  x_{2},t_{2}\right)  & < & -%
{\displaystyle\int\limits_{t_{1}}^{t_{2}}}
\left\langle \gamma^{\prime}\left(  t\right)  ,\nabla_{b}f\right\rangle dt+%
{\displaystyle\int\limits_{t_{1}}^{t_{2}}}
\frac{1}{\delta}\left(  \frac{C_{1}}{t}+C_{2}-\left\vert \nabla_{b}%
f\right\vert ^{2}\right)  dt\\
& \leq &
{\displaystyle\int\limits_{t_{1}}^{t_{2}}}
\left(  \frac{\delta}{4}\left\vert \gamma^{\prime}\left(  t\right)
\right\vert ^{2}+\frac{C_{1}}{\delta t}+\frac{C_{2}}{\delta}\right)  dt.
\end{array}
\]
We could choose
\[
\left\vert \gamma^{\prime}\left(  t\right)  \right\vert =\frac{ d_{cc}\left(
x_{1},x_{2}\right)  }{t_{2}-t_{1}};
\]
we reach%

\[
\frac{u(x_{1},t_{1})}{u(x_{2},t_{2})}<\left(  \frac{t_{2}}{t_{1}}\right)  ^{
\frac{C_{1}}{\delta}}\cdot\exp\left(  \frac{\delta}{4}\frac{ d_{cc}%
(x_{1},x_{2})^{2}}{t_{2}-t_{1}}+\frac{C_{2}}{\delta} (t_{2}-t_{1})\right)  .
\]

\end{proof}

\bigskip

\section{Li-Yau gradient estimates for sum of squares of vector fields}

In the paper of H.-D. Cao and S.-T. Yau (\cite{cy}), they derived the gradient
estimate for step $2$. Here we generalize the result to higher step under the
assumption of the curvature-dimension inequality. \ Let $M$ be a closed smooth
manifold and $L$ be an operator with respect to the sum of squares of vector
fields $\{e_{1},\ e_{2},\ ...,e_{d}\}$
\[
L=%
{\displaystyle\sum\limits_{j\in I_{d}}}
e_{j}^{2}.
\]
Suppose that $u$ is the positive solution of
\[
(L-\frac{\partial}{\partial t})u=0
\]
on $M\times\lbrack0,+\infty).$ Now we introduce another test function as in
\cite{cy} for $f(x,t)=\ln u(x,t)$
\begin{equation}
G\left(  x,t\right)  =t\left[
{\displaystyle\sum\limits_{j\in I_{d}}}
\left\vert e_{j}f\right\vert ^{2}+%
{\displaystyle\sum\limits_{\alpha\in\Lambda}}
\left(  1+\left\vert Y_{\alpha}f\right\vert ^{2}\right)  ^{\lambda}-\delta
f_{t}\right] \label{51}%
\end{equation}
for $\lambda\in\left(  \frac{1}{2},1\right)  $ to be determined later. Note
that the power $\lambda$ in this test function $G$ is necessary due to
(\ref{2016aaa}).

By the same computation as in \textit{Lemma 2.1} of \cite{cy}, we have Lemma 5.1.

\begin{lemma}
\label{l51} Let $M$ be a smooth connected manifold with a positive measure and
$L$ be an operator with respect to the sum of squares of vector fields
$\{e_{1},\ e_{2},\ ...,e_{d}\}$. Suppose that $u$ is the positive solution of%
\[
(L-\frac{\partial}{\partial t})u=0
\]
on $M\times\lbrack0,+\infty).$ Then$\ $the following equality holds:%
\[%
\begin{array}
[c]{ccl}%
\left(  L-\frac{\partial}{\partial t}\right)  G & = & -\frac{G}{t}+2t\left(
{\displaystyle\sum\limits_{i,j\in I_{d}}}
\left\vert e_{i}e_{j}f\right\vert ^{2}+%
{\displaystyle\sum\limits_{j\in I_{d}}}
\left(  e_{j}f\right)  \left(  \left[  L,e_{j}\right]  f\right)  \right) \\
&  & +2\lambda t%
{\displaystyle\sum\limits_{i\in I_{d},\alpha\in\Lambda}}
\left(  1+\left\vert Y_{\alpha}f\right\vert ^{2}\right)  ^{\lambda
-2}\left\vert e_{i}Y_{\alpha}f\right\vert ^{2}\left[  1+\left(  2\lambda
-1\right)  \left\vert Y_{\alpha}f\right\vert ^{2}\right] \\
&  & +2\lambda t%
{\displaystyle\sum\limits_{\alpha\in\Lambda}}
\left(  1+\left\vert Y_{\alpha}f\right\vert ^{2}\right)  ^{\lambda-1}\left(
Y_{\alpha}f\right)  \left(  \left[  L,Y_{\alpha}\right]  f\right) \\
&  & +4\lambda t%
{\displaystyle\sum\limits_{i\in I_{d},\alpha\in\Lambda}}
\left(  1+\left\vert Y_{\alpha}f\right\vert ^{2}\right)  ^{\lambda-1}\left(
e_{i}f\right)  \left(  Y_{\alpha}f\right)  \left(  \left[  e_{i},Y_{\alpha
}\right]  f\right)  -2%
{\displaystyle\sum\limits_{j\in I_{d}}}
\left(  e_{j}f\right)  \left(  e_{j}G\right)  .
\end{array}
\]

\end{lemma}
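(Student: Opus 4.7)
The identity is a direct computation, parallel to the step-two case in Lemma 4.1 but with the linear factor $\beta t|Y_\alpha f|^2$ replaced by the nonlinear $(1+|Y_\alpha f|^2)^\lambda$. Setting $h_\alpha := 1+|Y_\alpha f|^2$ and $\tilde G := \sum_{j\in I_d}|e_j f|^2 + \sum_{\alpha\in\Lambda} h_\alpha^\lambda - \delta f_t$ so that $G = t\tilde G$, the plan is to expand $(L-\partial_t)G = -\tilde G + t(L-\partial_t)\tilde G$. The first summand supplies the $-G/t$, while the bulk of the work is to compute the three pieces of $(L-\partial_t)\tilde G$ and to collect every cubic first-order remainder into the drift $-2\sum_{j\in I_d}(e_j f)(e_j G)$.

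The engine is the log-heat identity $(L-\partial_t)f = -\sum_j(e_j f)^2$, which follows from $Lu = u_t$. Differentiating it in the direction of $e_j$ or $Y_\alpha$ and using that $L$ and $\partial_t$ commute yields
\[
(L-\partial_t)(e_j f) = [L,e_j]f - 2\sum_k(e_k f)(e_j e_k f), \qquad (L-\partial_t)(Y_\alpha f) = [L,Y_\alpha]f - 2\sum_k(e_k f)(Y_\alpha e_k f).
\]
These convert the bare expressions $L e_j f - (e_j f)_t$ and $L Y_\alpha f - (Y_\alpha f)_t$ into a clean commutator term plus a cubic first-derivative remainder destined to merge into the drift.

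The three pieces of $(L-\partial_t)\tilde G$ are treated in turn. Leibniz on $(e_j f)^2$ gives $2\sum_i|e_i e_j f|^2 + 2(e_j f)(L-\partial_t)e_j f$, and inserting the first commutator identity produces the $2\sum_{i,j}|e_i e_j f|^2 + 2\sum_j(e_j f)[L,e_j]f$ block together with a cubic remainder $-4\sum_{j,k}(e_j f)(e_k f)(e_j e_k f)$. For the nonlinear piece, the chain rule
\[
e_i^2 h_\alpha^\lambda = \lambda(\lambda-1)h_\alpha^{\lambda-2}(e_i h_\alpha)^2 + \lambda h_\alpha^{\lambda-1} e_i^2 h_\alpha
\]
together with $e_i h_\alpha = 2(Y_\alpha f)(e_i Y_\alpha f)$ and the algebraic simplification $2(\lambda-1)(Y_\alpha f)^2 + h_\alpha = 1 + (2\lambda-1)(Y_\alpha f)^2$ yields the advertised $2\lambda t\sum_{i,\alpha}h_\alpha^{\lambda-2}|e_i Y_\alpha f|^2[1+(2\lambda-1)|Y_\alpha f|^2]$ Hessian-like term. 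The second commutator identity then extracts $2\lambda t\sum_\alpha h_\alpha^{\lambda-1}(Y_\alpha f)([L,Y_\alpha]f)$, and splitting $Y_\alpha e_k f = e_k Y_\alpha f + [Y_\alpha,e_k]f$ isolates the cross-commutator sum $4\lambda t\sum_{i,\alpha}h_\alpha^{\lambda-1}(e_i f)(Y_\alpha f)([e_i,Y_\alpha]f)$ plus one further cubic remainder. Finally, since $L$ and $\partial_t$ commute, $(L-\partial_t)(-\delta f_t) = -\delta\partial_t(Lf-f_t) = 2\delta\sum_j(e_j f)(e_j f_t)$.

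The closing step is to verify that the three cubic remainders assemble exactly into $-2\sum_j(e_j f)(e_j G)$. Using
\[
e_j \tilde G = 2\sum_k(e_k f)(e_j e_k f) + 2\lambda\sum_\alpha h_\alpha^{\lambda-1}(Y_\alpha f)(e_j Y_\alpha f) - \delta\, e_j f_t,
\]
this reduces to a matching exercise, and this bookkeeping, rather than any deep inequality, is the main obstacle: one must track the sign of each triple product $(e_j f)(e_k f)(e_j e_k f)$, $(e_j f)(Y_\alpha f)(e_j Y_\alpha f)$, and $(e_j f)(e_j f_t)$ through the chain-rule expansion and confirm the coefficients agree. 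Once this matches, multiplying by $t$ and adding the $-G/t$ contribution from differentiating the prefactor yields the claimed identity.
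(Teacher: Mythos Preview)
Your proposal is correct and follows precisely the standard direct computation that the paper invokes by citing \cite[Lemma~2.1]{cy}; the paper itself gives no further details beyond that reference. Your outline in fact supplies more than the paper does: the chain-rule identity $2(\lambda-1)(Y_\alpha f)^2 + h_\alpha = 1+(2\lambda-1)(Y_\alpha f)^2$ and the bookkeeping that the three cubic remainders assemble into $-2\sum_j(e_j f)(e_j G)$ are exactly the points one needs to check, and you have them right.
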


Then, as a consequence of Lemma \ref{l51}, we get Proposition 5.2.

\begin{proposition}
If $M$ satisfies the curvature-dimension inequality $CD\left(  \rho_{1}%
,\rho_{2},\kappa,m\right)  $ for $\rho_{1}\in%
\mathbb{R}
,\rho_{2}>0,\kappa\geq0,m>0$, then%
\begin{equation}%
\begin{array}
[c]{ccl}%
\left(  L-\frac{\partial}{\partial t}\right)  G & \geq & -\frac{G}{t}+\frac
{t}{m}\left(  Lf\right)  ^{2}+t%
{\displaystyle\sum\limits_{i,j}}
\left\vert e_{i}e_{j}f\right\vert ^{2}+t\rho_{2}\Gamma^{Z}\left(  f,\ f\right)
\\
&  & +2\lambda\left(  2\lambda-1\right)  t%
{\displaystyle\sum\limits_{i,\alpha}}
\left(  1+\left\vert Y_{\alpha}f\right\vert ^{2}\right)  ^{\lambda
-1}\left\vert e_{i}Y_{\alpha}f\right\vert ^{2}\\
&  & +t%
{\displaystyle\sum\limits_{j}}
\left(  e_{j}f\right)  \left(  \left[  L,e_{j}\right]  f\right)  +t\left(
\rho_{1}-\frac{\kappa}{\nu}\right)  \Gamma\left(  f,\ f\right)  -\nu
t\Gamma_{2}^{Z}\left(  f,\ f\right) \\
&  & +2\lambda t%
{\displaystyle\sum\limits_{\alpha}}
\left(  1+\left\vert Y_{\alpha}f\right\vert ^{2}\right)  ^{\lambda-1}\left(
Y_{\alpha}f\right)  \left(  \left[  L,Y_{\alpha}\right]  f\right) \\
&  & +4\lambda t%
{\displaystyle\sum\limits_{i,\alpha}}
\left(  1+\left\vert Y_{\alpha}f\right\vert ^{2}\right)  ^{\lambda-1}\left(
e_{i}f\right)  \left(  Y_{\alpha}f\right)  \left(  \left[  e_{i},Y_{\alpha
}\right]  f\right)  -2%
{\displaystyle\sum\limits_{j}}
\left(  e_{j}f\right)  \left(  e_{j}G\right)  .
\end{array}
\label{52}%
\end{equation}

\end{proposition}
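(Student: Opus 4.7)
The plan is to start from the exact identity furnished by Lemma \ref{l51} and convert it into the stated inequality by (a) extracting a copy of $\Gamma_{2}(f,f)$ to which the curvature--dimension hypothesis may be applied, and (b) absorbing the $\lambda$-weighted Hessian-like sum into a cleaner form using the elementary inequality $1+(2\lambda-1)|Y_{\alpha}f|^{2}\ge (2\lambda-1)\bigl(1+|Y_{\alpha}f|^{2}\bigr)$, valid precisely because $\tfrac{1}{2}<\lambda<1$. All remaining terms of Lemma \ref{l51} will be carried unchanged into the final estimate.

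First, I would recall from the Definition in Section~1 that
\[
\Gamma_{2}(f,f)=\sum_{i,j\in I_{d}}|e_{i}e_{j}f|^{2}+\sum_{j\in I_{d}}(e_{j}f)([L,e_{j}]f),
\]
so the first parenthesis on the right-hand side of Lemma \ref{l51} is exactly $\Gamma_{2}(f,f)$. Writing the coefficient $2t$ in front of it as $t+t$, I leave one copy of $t\Gamma_{2}(f,f)$ in its expanded form (this yields the terms $t\sum|e_{i}e_{j}f|^{2}$ and $t\sum(e_{j}f)([L,e_{j}]f)$ appearing in \eqref{52}) and apply the generalized curvature--dimension inequality $CD(\rho_{1},\rho_{2},\kappa,m)$ to the other copy, which gives
\[
t\Gamma_{2}(f,f)\;\ge\;\frac{t}{m}(Lf)^{2}+t\Bigl(\rho_{1}-\frac{\kappa}{\nu}\Bigr)\Gamma(f,f)+t\rho_{2}\Gamma^{Z}(f,f)-\nu t\Gamma_{2}^{Z}(f,f).
\]
This produces all the $(Lf)^{2}$, $\Gamma$, $\Gamma^{Z}$, and $\Gamma_{2}^{Z}$ contributions in the statement.

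Next, for the $\lambda$-weighted Hessian-type term in Lemma \ref{l51}, namely
\[
2\lambda t\sum_{i\in I_{d},\,\alpha\in\Lambda}\bigl(1+|Y_{\alpha}f|^{2}\bigr)^{\lambda-2}|e_{i}Y_{\alpha}f|^{2}\bigl[1+(2\lambda-1)|Y_{\alpha}f|^{2}\bigr],
\]
I note the elementary identity
\[
1+(2\lambda-1)|Y_{\alpha}f|^{2}-(2\lambda-1)\bigl(1+|Y_{\alpha}f|^{2}\bigr)=2-2\lambda>0,
\]
so that, since $2\lambda-1>0$ and $|e_{i}Y_{\alpha}f|^{2}\ge 0$,
\[
\bigl(1+|Y_{\alpha}f|^{2}\bigr)^{\lambda-2}\bigl[1+(2\lambda-1)|Y_{\alpha}f|^{2}\bigr]\;\ge\;(2\lambda-1)\bigl(1+|Y_{\alpha}f|^{2}\bigr)^{\lambda-1}.
\]
Multiplying by $2\lambda t|e_{i}Y_{\alpha}f|^{2}$ and summing in $(i,\alpha)$ yields the term $2\lambda(2\lambda-1)t\sum(1+|Y_{\alpha}f|^{2})^{\lambda-1}|e_{i}Y_{\alpha}f|^{2}$ in \eqref{52}. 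The remaining terms (the $-G/t$, the commutator sum $\sum(Y_{\alpha}f)([L,Y_{\alpha}]f)$, the mixed commutator sum $\sum(e_{i}f)(Y_{\alpha}f)([e_{i},Y_{\alpha}]f)$, and the drift $-2\sum(e_{j}f)(e_{j}G)$) are carried over verbatim from Lemma \ref{l51}.

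Combining the three sources and collecting yields exactly the inequality \eqref{52}. No step here is a real obstacle; the only point that requires care is the bookkeeping that one must apply the $CD$ inequality to only \emph{half} of the coefficient $2t$ in front of $\Gamma_{2}(f,f)$, keeping the other half in expanded form, because the terms $t\sum|e_{i}e_{j}f|^{2}$ and $t\sum(e_{j}f)([L,e_{j}]f)$ will be needed separately in the subsequent Li--Yau argument for Theorem \ref{t3}.
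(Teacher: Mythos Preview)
Your proof is correct and is precisely the argument the paper has in mind: the authors state Proposition~5.2 merely ``as a consequence of Lemma~\ref{l51}'' without spelling out the details, and your two steps---splitting $2t\Gamma_{2}(f,f)=t\Gamma_{2}(f,f)+t\Gamma_{2}(f,f)$ so that one copy is kept in expanded form while the $CD(\rho_{1},\rho_{2},\kappa,m)$ inequality is applied to the other, together with the elementary bound $1+(2\lambda-1)|Y_{\alpha}f|^{2}\ge(2\lambda-1)(1+|Y_{\alpha}f|^{2})$ for $\tfrac12<\lambda<1$---are exactly what is needed to pass from the identity of Lemma~\ref{l51} to \eqref{52}. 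Your remark that only half of $2t\Gamma_{2}(f,f)$ should be fed into the $CD$ inequality is the key bookkeeping point, and it is confirmed by how the paper subsequently uses \eqref{52} in the proof of Theorem~\ref{t3} (the retained $t\sum|e_{i}e_{j}f|^{2}$ and $t\sum(e_{j}f)([L,e_{j}]f)$ are indeed consumed separately there).
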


\begin{remark}
With the help of the curvature-dimension inequality, we obtain the extra
positive term $t\rho_{2}\Gamma^{Z}\left(  f,f\right)  $ in order to control
some of the remaining negative terms in the upcoming estimates. Note that
there are similar spirits as in \cite[(2.10)]{cy} and \cite[(2.10)]{ckl}.
\end{remark}

Now we are ready to prove the main theorem in this section.

The proof of \textbf{Theorem \ref{t3} :}

\begin{proof}
Here we follow the method as in (\cite[Proposition 2.1.]{cy}).\textit{\ }We
claim that there are positive constants $C_{1},C_{2},C_{3}$ such that%
\[
G\leq C_{1}+C_{2}t+C_{3}t^{\frac{2\lambda-1}{\lambda-1}}.
\]
If not, then for arbitrary such $C_{1},C_{2},C_{3},$ we have
\[
G>C_{1}+C_{2}t+C_{3}t^{\frac{2\lambda-1}{\lambda-1}}%
\]
at its maximum $\left(  x_{0},t_{0}\right)  $ on $M\times\left[  0,T\right]  $
for some $T>0$. Clearly,
\[
\left\{
\begin{array}
[c]{lll}%
t_{0}>0 & , & \left(  e_{j}G\right)  \left(  x_{0},t_{0}\right)  =0,\\
\frac{\partial G}{\partial t}\left(  x_{0},t_{0}\right)  \geq0 & , & LG\left(
x_{0},t_{0}\right)  \leq0,
\end{array}
\right.
\]
for $j\in I_{d}.$

Choosing%
\[
\nu=\lambda\left(  2\lambda-1\right)  \left(  1+\max_{\alpha}\left(
\left\vert Y_{\alpha}f\right\vert ^{2}\left(  x_{0},t_{0}\right)  \right)
\right)  ^{\lambda-1}%
\]
and evaluating $\left(  \ref{52}\right)  $ at $\left(  x_{0},t_{0}\right)  $,
we obtain%
\begin{equation}%
\begin{array}
[c]{ccl}%
0 & \geq & -\frac{G}{t_{0}}+\frac{t_{0}}{m}\left(  Lf\right)  ^{2}+t_{0}%
{\displaystyle\sum\limits_{i,j}}
\left\vert e_{i}e_{j}f\right\vert ^{2}+t_{0}\rho_{2}%
{\displaystyle\sum\limits_{\alpha}}
\left\vert Y_{\alpha}f\right\vert ^{2}\\
&  & +t_{0}%
{\displaystyle\sum\limits_{j}}
\left(  e_{j}f\right)  \left(  \left[  L,e_{j}\right]  f\right) \\
&  & +\lambda\left(  2\lambda-1\right)  t_{0}%
{\displaystyle\sum\limits_{j,\alpha}}
\left(  1+\left\vert Y_{\alpha}f\right\vert ^{2}\right)  ^{\lambda
-1}\left\vert e_{j}Y_{\alpha}f\right\vert ^{2}\\
&  & +\lambda\left(  3-2\lambda\right)  t_{0}%
{\displaystyle\sum\limits_{\alpha}}
\left(  1+\left\vert Y_{\alpha}f\right\vert ^{2}\right)  ^{\lambda-1}\left(
Y_{\alpha}f\right)  \left(  \left[  L,Y_{\alpha}\right]  f\right) \\
&  & +4\lambda t_{0}%
{\displaystyle\sum\limits_{j,\alpha}}
\left(  1+\left\vert Y_{\alpha}f\right\vert ^{2}\right)  ^{\lambda-1}\left(
e_{j}f\right)  \left(  Y_{\alpha}f\right)  \left(  \left[  e_{j},Y_{\alpha
}\right]  f\right) \\
&  & -\frac{t_{0}\kappa}{\lambda\left(  2\lambda-1\right)  }%
{\displaystyle\sum\limits_{\alpha}}
\left(  1+\left\vert Y_{\alpha}f\right\vert ^{2}\right)  ^{1-\lambda}%
{\displaystyle\sum\limits_{j}}
\left\vert e_{j}f\right\vert ^{2}+t_{0}\rho_{1}%
{\displaystyle\sum\limits_{j}}
\left\vert e_{j}f\right\vert ^{2}.
\end{array}
\label{53}%
\end{equation}
By straightforward computation, we have%
\begin{equation}%
\begin{array}
[c]{ccl}%
\left\vert \left[  L,e_{j}\right]  f\right\vert  & = & \left\vert 2%
{\displaystyle\sum\limits_{i}}
e_{i}\left[  e_{i},e_{j}\right]  f-%
{\displaystyle\sum\limits_{i}}
\left[  e_{i},\left[  e_{i},e_{j}\right]  \right]  f\right\vert \\
& \leq & 2%
{\displaystyle\sum\limits_{i}}
\left\vert e_{i}\left[  e_{i},e_{j}\right]  f\right\vert +%
{\displaystyle\sum\limits_{\alpha}}
\left\vert Y_{\alpha}f\right\vert
\end{array}
\label{54}%
\end{equation}
and%

\begin{equation}%
\begin{array}
[c]{ccl}%
\left\vert \left[  L,Y_{\eta}^{\prime}\right]  f\right\vert  & = & \left\vert
2%
{\displaystyle\sum\limits_{i}}
e_{i}\left[  e_{i},Y_{\eta}^{\prime}\right]  f-%
{\displaystyle\sum\limits_{i}}
\left[  e_{i},\left[  e_{i},Y_{\eta}^{\prime}\right]  \right]  f\right\vert \\
& \leq & 2%
{\displaystyle\sum\limits_{i,A}}
\left\vert e_{i}Y_{A}^{\prime\prime}f\right\vert +d\left(  a%
{\displaystyle\sum\limits_{j}}
\left\vert e_{j}f\right\vert +b%
{\displaystyle\sum\limits_{\eta}}
\left\vert Y_{\eta}^{\prime}f\right\vert +c%
{\displaystyle\sum\limits_{A}}
\left\vert Y_{A}^{\prime\prime}f\right\vert \right) \\
& \leq & 2%
{\displaystyle\sum\limits_{i,\alpha}}
\left\vert e_{i}Y_{\alpha}f\right\vert +da%
{\displaystyle\sum\limits_{j}}
\left\vert e_{j}f\right\vert +d\left(  b+c\right)
{\displaystyle\sum\limits_{\alpha}}
\left\vert Y_{\alpha}f\right\vert .
\end{array}
\label{55}%
\end{equation}
Similarly%

\begin{equation}%
\begin{array}
[c]{ccl}%
\left\vert \left[  L,Y_{A}^{\prime\prime}\right]  f\right\vert  & = &
\left\vert
{\displaystyle\sum\limits_{i}}
e_{i}\left[  e_{i},Y_{A}^{\prime\prime}\right]  f+%
{\displaystyle\sum\limits_{i}}
\left[  e_{i},Y_{A}^{\prime\prime}\right]  e_{i}f\right\vert \\
& \leq &
{\displaystyle\sum\limits_{i}}
\left\vert e_{i}\left(  a_{\left(  i,A\right)  }^{n}e_{n}+b_{\left(
i,A\right)  }^{\eta}Y_{\eta}^{\prime}+c_{\left(  i,A\right)  }^{B}%
Y_{B}^{\prime\prime}\right)  f\right\vert \\
&  & +a%
{\displaystyle\sum\limits_{i,n}}
\left\vert e_{n}e_{i}f\right\vert +b%
{\displaystyle\sum\limits_{i,\eta}}
\left\vert Y_{\eta}^{\prime}e_{i}f\right\vert +c%
{\displaystyle\sum\limits_{i,B}}
\left\vert Y_{B}^{\prime\prime}e_{i}f\right\vert \\
& \leq & 2a%
{\displaystyle\sum\limits_{i,j}}
\left\vert e_{i}e_{j}f\right\vert +2b%
{\displaystyle\sum\limits_{i,\eta}}
\left\vert e_{i}Y_{\eta}^{\prime}f\right\vert \\
&  & +2c%
{\displaystyle\sum\limits_{i,A}}
\left\vert e_{i}Y_{A}^{\prime\prime}f\right\vert +b%
{\displaystyle\sum\limits_{i,\eta}}
\left\vert \left[  e_{i},Y_{\eta}^{\prime}\right]  f\right\vert \\
&  & +c%
{\displaystyle\sum\limits_{i,A}}
\left\vert \left[  e_{i},Y_{A}^{\prime\prime}\right]  f\right\vert
+da^{\prime}%
{\displaystyle\sum\limits_{j}}
\left\vert e_{j}f\right\vert \\
&  & +db^{\prime}%
{\displaystyle\sum\limits_{\eta}}
\left\vert Y_{\eta}^{\prime}f\right\vert +dc^{\prime}%
{\displaystyle\sum\limits_{B}}
\left\vert Y_{B}^{\prime\prime}f\right\vert \\
& \leq & 2a%
{\displaystyle\sum\limits_{i,j}}
\left\vert e_{i}e_{j}f\right\vert +2\left(  b+c\right)
{\displaystyle\sum\limits_{i,\alpha}}
\left\vert e_{i}Y_{\alpha}f\right\vert +da^{\prime}%
{\displaystyle\sum\limits_{j}}
\left\vert e_{j}f\right\vert +db^{\prime}%
{\displaystyle\sum\limits_{\eta}}
\left\vert Y_{\eta}^{\prime}f\right\vert \\
&  & +\left(  dc^{\prime}+b\right)
{\displaystyle\sum\limits_{B}}
\left\vert Y_{B}^{\prime\prime}f\right\vert +c%
{\displaystyle\sum\limits_{i,B}}
\left\vert \left(  a_{\left(  i,B\right)  }^{n}e_{n}+b_{\left(  i,B\right)
}^{\eta}Y_{\eta}^{\prime}+c_{\left(  i,B\right)  }^{A}Y_{A}^{\prime\prime
}\right)  f\right\vert \\
& \leq & 2a%
{\displaystyle\sum\limits_{i,j}}
\left\vert e_{i}e_{j}f\right\vert +2\left(  b+c\right)
{\displaystyle\sum\limits_{i,\alpha}}
\left\vert e_{i}Y_{\alpha}f\right\vert +d\left(  a^{\prime}+acd^{4}\right)
{\displaystyle\sum\limits_{j}}
\left\vert e_{j}f\right\vert \\
&  & +d\left(  b^{\prime}+bcd^{4}\right)
{\displaystyle\sum\limits_{\eta}}
\left\vert Y_{\eta}^{\prime}f\right\vert +\left(  dc^{\prime}+b+c^{2}%
d^{4}\right)
{\displaystyle\sum\limits_{B}}
\left\vert Y_{B}^{\prime\prime}f\right\vert \\
& \leq & 2a%
{\displaystyle\sum\limits_{i,j}}
\left\vert e_{i}e_{j}f\right\vert +2\left(  b+c\right)
{\displaystyle\sum\limits_{i,\alpha}}
\left\vert e_{i}Y_{\alpha}f\right\vert +d\left(  a^{\prime}+acd^{4}\right)
{\displaystyle\sum\limits_{j}}
\left\vert e_{j}f\right\vert \\
&  & +\left(  db^{\prime}+bcd^{5}+dc^{\prime}+b+c^{2}d^{4}\right)
{\displaystyle\sum\limits_{\alpha}}
\left\vert Y_{\alpha}f\right\vert .
\end{array}
\label{56}%
\end{equation}
Also%

\begin{equation}
\left\vert \left[  e_{i},Y_{\eta}^{\prime}\right]  f\right\vert \leq%
{\displaystyle\sum\limits_{\alpha}}
\left\vert Y_{\alpha}f\right\vert \label{57}%
\end{equation}
and
\begin{equation}
\left\vert \left[  e_{i},Y_{A}^{\prime\prime}\right]  f\right\vert \leq a%
{\displaystyle\sum\limits_{j}}
\left\vert e_{j}f\right\vert +\left(  b+c\right)
{\displaystyle\sum\limits_{\alpha}}
\left\vert Y_{\alpha}f\right\vert .\label{58}%
\end{equation}
Substituting $\left(  \ref{54}\right)  -\left(  \ref{58}\right)  $ into
$\left(  \ref{53}\right)  $ and noting that%
\[
\left(  L-\frac{\partial}{\partial t}\right)  f=-%
{\displaystyle\sum\limits_{j}}
\left\vert e_{j}f\right\vert ^{2},
\]
we have%
\begin{equation}%
\begin{array}
[c]{ccl}%
0 & \geq & -\frac{G}{t_{0}}+\frac{t_{0}}{m}\left(
{\displaystyle\sum\limits_{j}}
\left\vert e_{j}f\right\vert ^{2}-f_{t}\right)  ^{2}+t_{0}%
{\displaystyle\sum\limits_{i,j}}
\left\vert e_{i}e_{j}f\right\vert ^{2}+t_{0}\rho_{2}%
{\displaystyle\sum\limits_{\alpha}}
\left\vert Y_{\alpha}f\right\vert ^{2}+\\
&  & +\lambda\left(  2\lambda-1\right)  t_{0}%
{\displaystyle\sum\limits_{j,\alpha}}
\left(  1+\left\vert Y_{\alpha}f\right\vert ^{2}\right)  ^{\lambda
-1}\left\vert e_{j}Y_{\alpha}f\right\vert ^{2}-2t_{0}\underset{\left(
1\right)  }{\underbrace{%
{\displaystyle\sum\limits_{i,j}}
\left\vert e_{j}f\right\vert \left\vert e_{i}\left[  e_{i},e_{j}\right]
f\right\vert }}\\
&  & -t_{0}\underset{\left(  2\right)  }{\underbrace{\left(
{\displaystyle\sum\limits_{j}}
\left(  e_{j}f\right)  \right)  \left(
{\displaystyle\sum\limits_{\alpha}}
\left\vert Y_{\alpha}f\right\vert \right)  }}\\
&  & -\underset{\left(  3\right)  }{\underbrace{2a\lambda\left(
3-2\lambda\right)  t_{0}%
{\displaystyle\sum\limits_{i,j,\beta}}
\left(  1+\left\vert Y_{\beta}f\right\vert ^{2}\right)  ^{\lambda-1}\left\vert
Y_{\beta}f\right\vert \left\vert e_{i}e_{j}f\right\vert }}\\
&  & -\underset{\left(  4\right)  }{\underbrace{2\left(  1+b+c\right)
\lambda\left(  3-2\lambda\right)  t_{0}%
{\displaystyle\sum\limits_{i,\alpha,\beta}}
\left(  1+\left\vert Y_{\beta}f\right\vert ^{2}\right)  ^{\lambda-1}\left\vert
Y_{\beta}f\right\vert \left\vert e_{i}Y_{\alpha}f\right\vert }}\\
&  & -d\left(  a+a^{\prime}+acd^{4}\right)  \lambda\left(  3-2\lambda\right)
t_{0}%
{\displaystyle\sum\limits_{j,\beta}}
\left(  1+\left\vert Y_{\beta}f\right\vert ^{2}\right)  ^{\lambda-1}\left\vert
Y_{\beta}f\right\vert \left\vert e_{j}f\right\vert \\
&  & -\left(  db+dc+db^{\prime}+bcd^{5}+dc^{\prime}+b+c^{2}d^{4}\right)
\lambda\left(  3-2\lambda\right)  t_{0}%
{\displaystyle\sum\limits_{\alpha,\beta}}
\left(  1+\left\vert Y_{\alpha}f\right\vert ^{2}\right)  ^{\lambda
-1}\left\vert Y_{\alpha}f\right\vert \left\vert Y_{\beta}f\right\vert \\
&  & -\underset{\left(  5\right)  }{\underbrace{4a\lambda t_{0}\left(
{\displaystyle\sum\limits_{\beta}}
\left(  1+\left\vert Y_{\beta}f\right\vert ^{2}\right)  ^{\lambda-1}\left\vert
Y_{\beta}f\right\vert \right)  \left(
{\displaystyle\sum\limits_{j}}
\left\vert e_{j}f\right\vert \right)  ^{2}}}\\
&  & -\underset{\left(  6\right)  }{\underbrace{4\left(  1+b+c\right)  \lambda
t_{0}%
{\displaystyle\sum\limits_{j,\alpha,\beta}}
\left(  1+\left\vert Y_{\beta}f\right\vert ^{2}\right)  ^{\lambda-1}\left\vert
e_{j}f\right\vert \left\vert Y_{\alpha}f\right\vert \left\vert Y_{\beta
}f\right\vert }}\\
&  & -\underset{\left(  7\right)  }{\underbrace{\frac{t_{0}\kappa}%
{\lambda\left(  2\lambda-1\right)  }%
{\displaystyle\sum\limits_{\alpha}}
\left(  1+\left\vert Y_{\alpha}f\right\vert ^{2}\right)  ^{1-\lambda}\cdot%
{\displaystyle\sum\limits_{j}}
\left\vert e_{j}f\right\vert ^{2}}}-t_{0}\rho_{1}%
{\displaystyle\sum\limits_{j}}
\left\vert e_{j}f\right\vert ^{2}.
\end{array}
\label{59}%
\end{equation}

Now we estimate each term $\left(  1\right)  -\left(  7\right)  $ in the right
hand of $\left(  \ref{59}\right)  $ as follows :%

\[%
\begin{array}
[c]{ccl}%
\left(  1\right)  \text{ }%
{\displaystyle\sum\limits_{i,j}}
\left\vert e_{j}f\right\vert \left\vert e_{i}\left[  e_{i},e_{j}\right]
f\right\vert  & \leq &
{\displaystyle\sum\limits_{i,j,\alpha}}
\left\vert e_{j}f\right\vert \left\vert e_{i}Y_{\alpha}f\right\vert \\
& \leq & \frac{d^{2}}{\lambda\left(  2\lambda-1\right)  }\left(
{\displaystyle\sum\limits_{j}}
\left\vert e_{j}f\right\vert ^{2}\right)  \left(
{\displaystyle\sum\limits_{\alpha}}
\left(  1+\left\vert Y_{\alpha}f\right\vert ^{2}\right)  ^{\frac{1-\lambda}%
{2}}\right)  ^{2}\\
&  & +\frac{\lambda\left(  2\lambda-1\right)  }{4}%
{\displaystyle\sum\limits_{i,\alpha}}
\left(  1+\left\vert Y_{\alpha}f\right\vert ^{2}\right)  ^{\lambda
-1}\left\vert e_{i}Y_{\alpha}f\right\vert ^{2}\\
& \leq & \varepsilon\left(
{\displaystyle\sum\limits_{j}}
\left\vert e_{j}f\right\vert ^{2}\right)  ^{2}+\frac{d^{4}}{\varepsilon\left(
2\lambda-1\right)  ^{2}}\left(
{\displaystyle\sum\limits_{\alpha}}
\left(  1+\left\vert Y_{\alpha}f\right\vert ^{2}\right)  ^{\frac{1-\lambda}%
{2}}\right)  ^{4}\\
&  & +\frac{\lambda\left(  2\lambda-1\right)  }{4}%
{\displaystyle\sum\limits_{i,\alpha}}
\left(  1+\left\vert Y_{\alpha}f\right\vert ^{2}\right)  ^{\lambda
-1}\left\vert e_{i}Y_{\alpha}f\right\vert ^{2}.
\end{array}
\]

\[%
\begin{array}
[c]{ccl}%
\left(  2\right)  \text{ }\left(
{\displaystyle\sum\limits_{j}}
\left(  e_{j}f\right)  \right)  \left(
{\displaystyle\sum\limits_{\alpha}}
\left\vert Y_{\alpha}f\right\vert \right)  & \leq & \frac{4d^{2}\left(
1+d\right)  }{\rho_{2}}\left(
{\displaystyle\sum\limits_{j}}
\left\vert e_{j}f\right\vert \right)  ^{2}+\frac{\rho_{2}}{16d^{2}\left(
1+d\right)  }\left(
{\displaystyle\sum\limits_{\alpha}}
\left\vert Y_{\alpha}f\right\vert \right)  ^{2}\\
& \leq & \frac{4d^{2}\left(  1+d\right)  }{\rho_{2}}\left(
{\displaystyle\sum\limits_{j}}
\left\vert e_{j}f\right\vert \right)  ^{2}+\frac{\rho_{2}}{16}\left(
{\displaystyle\sum\limits_{\alpha}}
\left\vert Y_{\alpha}f\right\vert ^{2}\right)  .
\end{array}
\]

\[%
\begin{array}
[c]{ccl}%
\left(  3\right)  &  & t_{0}\left[  2a\lambda\left(  3-2\lambda\right)
{\displaystyle\sum\limits_{i,j,\beta}}
\left(  1+\left\vert Y_{\beta}f\right\vert ^{2}\right)  ^{\lambda-1}\left\vert
Y_{\beta}f\right\vert \left\vert e_{i}e_{j}f\right\vert \right] \\
& = & t_{0}\left[  \left(  2a\lambda\left(  3-2\lambda\right)
{\displaystyle\sum\limits_{\beta}}
\left(  1+\left\vert Y_{\beta}f\right\vert ^{2}\right)  ^{\lambda-1}\left\vert
Y_{\beta}f\right\vert \right)  \left(
{\displaystyle\sum\limits_{i,j}}
\left\vert e_{i}e_{j}f\right\vert \right)  \right] \\
& \leq & t_{0}\left[
{\displaystyle\sum\limits_{i,j}}
\left\vert e_{i}e_{j}f\right\vert ^{2}+d^{2}\lambda^{2}\left(  3-2\lambda
\right)  ^{2}a^{2}\left(
{\displaystyle\sum\limits_{\beta}}
\left(  1+\left\vert Y_{\beta}f\right\vert ^{2}\right)  ^{\lambda-1}\left\vert
Y_{\beta}f\right\vert \right)  ^{2}\right]  .
\end{array}
\]

\[%
\begin{array}
[c]{ccl}%
\left(  4\right)  \text{ } &  & 2\left(  1+b+c\right)  \lambda\left(
3-2\lambda\right)  t_{0}%
{\displaystyle\sum\limits_{i,\alpha,\beta}}
\left(  1+\left\vert Y_{\beta}f\right\vert ^{2}\right)  ^{\lambda-1}\left\vert
Y_{\beta}f\right\vert \left\vert e_{i}Y_{\alpha}f\right\vert \\
& = & t_{0}\gamma%
{\displaystyle\sum\limits_{i,\alpha,\beta}}
\left[  \left(  1+\left\vert Y_{\beta}f\right\vert ^{2}\right)  ^{\lambda
-1}\left\vert Y_{\beta}f\right\vert \left(  1+\left\vert Y_{\alpha
}f\right\vert ^{2}\right)  ^{\frac{1-\lambda}{2}}\left(  1+\left\vert
Y_{\alpha}f\right\vert ^{2}\right)  ^{\frac{\lambda-1}{2}}\left\vert
e_{i}Y_{\alpha}f\right\vert \right] \\
& \leq & t_{0}\gamma%
{\displaystyle\sum\limits_{i,\alpha,\beta}}
[\frac{d^{2}\left(  1+d\right)  \gamma}{2\lambda\left(  2\lambda-1\right)
}\left(  \left(  1+\left\vert Y_{\beta}f\right\vert ^{2}\right)  ^{\lambda
-1}\left\vert Y_{\beta}f\right\vert \right)  ^{2}\left(  1+\left\vert
Y_{\alpha}f\right\vert ^{2}\right)  ^{1-\lambda}\\
&  & +\frac{\lambda\left(  2\lambda-1\right)  }{2\gamma d^{2}\left(
1+d\right)  }\left(  1+\left\vert Y_{\alpha}f\right\vert ^{2}\right)
^{\lambda-1}\left\vert e_{i}Y_{\alpha}f\right\vert ^{2}]\\
& \leq & \frac{\gamma^{2}d^{3}\left(  1+d\right)  }{2\lambda\left(
2\lambda-1\right)  }t_{0}\left(
{\displaystyle\sum\limits_{\beta}}
\left(  1+\left\vert Y_{\beta}f\right\vert ^{2}\right)  ^{\lambda-1}\left\vert
Y_{\beta}f\right\vert \right)  ^{2}\left(
{\displaystyle\sum\limits_{\alpha}}
\left(  1+\left\vert Y_{\alpha}f\right\vert ^{2}\right)  ^{\frac{1-\lambda}%
{2}}\right)  ^{2}\\
&  & +\frac{\lambda\left(  2\lambda-1\right)  }{2}t_{0}%
{\displaystyle\sum\limits_{i,\alpha}}
\left(  1+\left\vert Y_{\alpha}f\right\vert ^{2}\right)  ^{\lambda
-1}\left\vert e_{i}Y_{\alpha}f\right\vert ^{2}\ for\text{ }\gamma=2\left(
1+b+c\right)  \lambda\left(  3-2\lambda\right)  .
\end{array}
\]

\[%
\begin{array}
[c]{ccl}%
\left(  5\right)  \text{ } &  & 4a\lambda t_{0}\left(
{\displaystyle\sum\limits_{\beta}}
\left(  1+\left\vert Y_{\beta}f\right\vert ^{2}\right)  ^{\lambda-1}\left\vert
Y_{\beta}f\right\vert \right)  \left(
{\displaystyle\sum\limits_{j}}
\left\vert e_{j}f\right\vert \right)  ^{2}\\
& \leq & \varepsilon\lambda t_{0}\left(
{\displaystyle\sum\limits_{j}}
\left\vert e_{j}f\right\vert ^{2}\right)  ^{2}+\frac{4\lambda t_{0}d^{2}a^{2}%
}{\varepsilon}\left(
{\displaystyle\sum\limits_{\beta}}
\left(  1+\left\vert Y_{\beta}f\right\vert ^{2}\right)  ^{\lambda-1}\left\vert
Y_{\beta}f\right\vert \right)  ^{2}.
\end{array}
\]

\[%
\begin{array}
[c]{ccl}%
\left(  6\right)  \text{ } &  & 4\left(  1+b+c\right)  \lambda t_{0}%
{\displaystyle\sum\limits_{j,\alpha,\beta}}
\left(  1+\left\vert Y_{\beta}f\right\vert ^{2}\right)  ^{\lambda-1}\left\vert
e_{j}f\right\vert \left\vert Y_{\alpha}f\right\vert \left\vert Y_{\beta
}f\right\vert \\
& \leq & t_{0}\frac{64d^{2}\left(  1+d\right)  \left(  1+b+c\right)  ^{2}%
}{\rho_{2}}\left(
{\displaystyle\sum\limits_{\beta}}
\left(  1+\left\vert Y_{\beta}f\right\vert ^{2}\right)  ^{\lambda-1}\left\vert
Y_{\beta}f\right\vert \right)  ^{2}\left(
{\displaystyle\sum\limits_{j}}
\left\vert e_{j}f\right\vert \right)  ^{2}\\
&  & +t_{0}\frac{\rho_{2}}{16d^{2}\left(  1+d\right)  }\left(
{\displaystyle\sum\limits_{\alpha}}
\left\vert Y_{\beta}f\right\vert \right)  ^{2}\\
& \leq & t_{0}\frac{\varepsilon}{d^{2}}\left(
{\displaystyle\sum\limits_{j}}
\left\vert e_{j}f\right\vert \right)  ^{4}+t_{0}\frac{\rho_{2}}{16}\left(
{\displaystyle\sum\limits_{\alpha}}
\left\vert Y_{\alpha}f\right\vert ^{2}\right) \\
&  & +t_{0}\frac{1024d^{6}\left(  1+d\right)  ^{2}\left(  1+b+c\right)  ^{4}%
}{\varepsilon\rho_{2}^{2}}\left(
{\displaystyle\sum\limits_{\beta}}
\left(  1+\left\vert Y_{\beta}f\right\vert ^{2}\right)  ^{\lambda-1}\left\vert
Y_{\beta}f\right\vert \right)  ^{4}\\
& \leq & t_{0}\varepsilon\left(
{\displaystyle\sum\limits_{j}}
\left\vert e_{j}f\right\vert ^{2}\right)  ^{2}+t_{0}\frac{\rho_{2}}{16}\left(
%
{\displaystyle\sum\limits_{\alpha}}
\left\vert Y_{\beta}f\right\vert ^{2}\right) \\
&  & +t_{0}\frac{1024d^{6}\left(  1+d\right)  ^{2}\left(  1+b+c\right)  ^{4}%
}{\varepsilon\rho_{2}^{2}}\left(
{\displaystyle\sum\limits_{\beta}}
\left(  1+\left\vert Y_{\beta}f\right\vert ^{2}\right)  ^{\lambda-1}\left\vert
Y_{\beta}f\right\vert \right)  ^{4}.
\end{array}
\]

\[%
\begin{array}
[c]{ccl}%
\left(  7\right)  \text{ } &  & \frac{t_{0}\kappa}{\lambda\left(
2\lambda-1\right)  }%
{\displaystyle\sum\limits_{\alpha}}
\left(  1+\left\vert Y_{\alpha}f\right\vert ^{2}\right)  ^{1-\lambda}\cdot%
{\displaystyle\sum\limits_{j}}
\left\vert e_{j}f\right\vert ^{2}\\
& \leq & t_{0}\left[  \varepsilon\left(
{\displaystyle\sum\limits_{j}}
\left\vert e_{j}f\right\vert ^{2}\right)  ^{2}+\frac{1}{4\varepsilon}%
\frac{\kappa^{2}}{\lambda^{2}\left(  2\lambda-1\right)  ^{2}}\left(
{\displaystyle\sum\limits_{\alpha}}
\left(  1+\left\vert Y_{\alpha}f\right\vert ^{2}\right)  ^{\frac{1-\lambda}%
{2}}\right)  ^{4}\right]  .
\end{array}
\]

Let%
\[%
\begin{array}
[c]{ccl}%
\overline{x} & = &
{\displaystyle\sum\limits_{j}}
\left\vert e_{j}f\right\vert ^{2}\left(  x_{0},t_{0}\right)  ,\\
x & = & \left(  \delta_{0}%
{\displaystyle\sum\limits_{j}}
\left\vert e_{j}f\right\vert ^{2}-\delta f_{t}\right)  \left(  x_{0}%
,t_{0}\right)  ,\\
y & = & \max_{\alpha}\left\vert Y_{\alpha}f\right\vert \left(  x_{0}%
,t_{0}\right)  .
\end{array}
\]
We may assume%
\[
y>1;
\]
otherwise, the similar method adopted as follows still holds for $y\leq1$.

Now we divide it into two cases:

$(I)\ Case\ I$ $:$ $x\geq0:$

In this case, we have%
\begin{equation}
\left(
{\displaystyle\sum\limits_{j}}
\left\vert e_{j}f\right\vert ^{2}-f_{t}\right)  ^{2}\geq\frac{x^{2}}%
{\delta^{2}}+\frac{\left(  \delta-\delta_{0}\right)  ^{2}}{\delta^{2}}\left(
{\displaystyle\sum\limits_{j}}
\left\vert e_{j}f\right\vert ^{2}\right)  ^{2}.\label{60}%
\end{equation}
Substituting $\left(  1\right)  -\left(  7\right)  $ and $\left(
\ref{60}\right)  $ into $\left(  \ref{59}\right)  $, we obtain%
\begin{equation}%
\begin{array}
[c]{ccl}%
0 & \geq & -\frac{G}{t_{0}}+\left\{  \frac{t_{0}}{m\delta^{2}}x^{2}%
+\frac{\left(  \delta-\delta_{0}\right)  ^{2}}{m\delta^{2}}t_{0}\left(
{\displaystyle\sum\limits_{j}}
\left\vert e_{j}f\right\vert ^{2}\right)  ^{2}+\frac{7}{8}\rho_{2}t_{0}\left(
%
{\displaystyle\sum\limits_{\alpha}}
\left\vert Y_{\alpha}f\right\vert ^{2}\right)  \right\} \\
&  & -2t_{0}\varepsilon\left(
{\displaystyle\sum\limits_{j}}
\left\vert e_{j}f\right\vert ^{2}\right)  ^{2}-\frac{2d^{4}}{\varepsilon
\left(  2\lambda-1\right)  ^{2}}t_{0}\left(
{\displaystyle\sum\limits_{\alpha}}
\left(  1+\left\vert Y_{\alpha}f\right\vert ^{2}\right)  ^{\frac{1-\lambda}%
{2}}\right)  ^{4}\\
&  & -\frac{4d^{2}\left(  1+d\right)  }{\rho_{2}}t_{0}\left(
{\displaystyle\sum\limits_{j}}
\left\vert e_{j}f\right\vert \right)  ^{2}-t_{0}d^{2}\lambda^{2}\left(
3-2\lambda\right)  ^{2}a^{2}\left(
{\displaystyle\sum\limits_{\alpha}}
\left(  1+\left\vert Y_{\alpha}f\right\vert ^{2}\right)  ^{\lambda
-1}\left\vert Y_{\alpha}f\right\vert \right)  ^{2}-\\
&  & -\frac{\gamma^{2}d^{3}\left(  1+d\right)  }{2\lambda\left(
2\lambda-1\right)  }t_{0}\left(
{\displaystyle\sum\limits_{\beta}}
\left(  1+\left\vert Y_{\beta}f\right\vert ^{2}\right)  ^{\lambda-1}\left\vert
Y_{\beta}f\right\vert \right)  ^{2}\left(
{\displaystyle\sum\limits_{\alpha}}
\left(  1+\left\vert Y_{\alpha}f\right\vert ^{2}\right)  ^{\frac{1-\lambda}%
{2}}\right)  ^{2}\\
&  & -d\left(  a+a^{\prime}+acd^{4}\right)  \lambda\left(  3-2\lambda\right)
t_{0}%
{\displaystyle\sum\limits_{j,\beta}}
\left(  1+\left\vert Y_{\beta}f\right\vert ^{2}\right)  ^{\lambda-1}\left\vert
e_{j}f\right\vert \left\vert Y_{\beta}f\right\vert \\
&  & -\left(  db+dc+db^{\prime}+bcd^{5}+dc^{\prime}+b+c^{2}d^{4}\right)
\lambda\left(  3-2\lambda\right)  t_{0}%
{\displaystyle\sum\limits_{\alpha,\beta}}
\left(  1+\left\vert Y_{\alpha}f\right\vert ^{2}\right)  ^{\lambda
-1}\left\vert Y_{\alpha}f\right\vert \left\vert Y_{\beta}f\right\vert \\
&  & -t_{0}\varepsilon\lambda\left(
{\displaystyle\sum\limits_{j}}
\left\vert e_{j}f\right\vert ^{2}\right)  ^{2}-\frac{4\lambda t_{0}d^{2}a^{2}%
}{\varepsilon}\left(
{\displaystyle\sum\limits_{\beta}}
\left(  1+\left\vert Y_{\beta}f\right\vert ^{2}\right)  ^{\lambda-1}\left\vert
Y_{\beta}f\right\vert \right)  ^{2}-t_{0}\varepsilon\left(
{\displaystyle\sum\limits_{j}}
\left\vert e_{j}f\right\vert ^{2}\right)  ^{2}\\
&  & -t_{0}\frac{1024d^{6}\left(  1+d\right)  ^{2}\left(  1+b+c\right)  ^{4}%
}{\varepsilon\rho_{2}^{2}}\left(
{\displaystyle\sum\limits_{\beta}}
\left(  1+\left\vert Y_{\beta}f\right\vert ^{2}\right)  ^{\lambda-1}\left\vert
Y_{\beta}f\right\vert \right)  ^{4}-t_{0}\varepsilon\left(
{\displaystyle\sum\limits_{j}}
\left\vert e_{j}f\right\vert ^{2}\right)  ^{2}\\
&  & -t_{0}\frac{1}{4\varepsilon}\frac{\kappa^{2}}{\lambda^{2}\left(
2\lambda-1\right)  ^{2}}\left(
{\displaystyle\sum\limits_{\alpha}}
\left(  1+\left\vert Y_{\alpha}f\right\vert ^{2}\right)  ^{\frac{1-\lambda}%
{2}}\right)  ^{4}-t_{0}\rho_{1}\left(
{\displaystyle\sum\limits_{j}}
\left\vert e_{j}f\right\vert ^{2}\right)  .
\end{array}
\label{61}%
\end{equation}
Because%
\[%
{\displaystyle\sum\limits_{j,\beta}}
\left(  1+\left\vert Y_{\beta}f\right\vert ^{2}\right)  ^{\lambda-1}\left\vert
e_{j}f\right\vert \left\vert Y_{\beta}f\right\vert \leq\frac{1}{4}\left(
{\displaystyle\sum\limits_{j}}
\left\vert e_{j}f\right\vert \right)  ^{2}+\left(
{\displaystyle\sum\limits_{\alpha}}
\left(  1+\left\vert Y_{\alpha}f\right\vert ^{2}\right)  ^{\lambda
-1}\left\vert Y_{\alpha}f\right\vert \right)  ^{2},
\]
we could write the inequality $\left(  \ref{61}\right)  $ in $\overline{x},y$
:%
\[%
\begin{array}
[c]{ccl}%
0 & \geq & -\frac{G}{t_{0}}+\frac{t_{0}}{m\delta^{2}}x^{2}+\frac{3}{4}\rho
_{2}t_{0}y^{2}+t_{0}[\frac{\left(  \delta-\delta_{0}\right)  ^{2}}{m\delta
^{2}}\overline{x}^{2}-2\varepsilon\overline{x}^{2}-\frac{4d^{3}\left(
1+d\right)  }{\rho_{2}}\overline{x}\\
&  & -\frac{d^{2}\left(  a+a^{\prime}+acd^{4}\right)  }{4}\left(
3-2\lambda\right)  \lambda\overline{x}-\lambda\varepsilon\overline{x}%
^{2}-\varepsilon\overline{x}^{2}-\varepsilon\overline{x}^{2}-\rho_{1}%
\overline{x}]\\
&  & +t_{0}[\frac{1}{8}\rho_{2}y^{2}-\frac{2d^{4}}{\varepsilon\left(
2\lambda-1\right)  ^{2}}y^{4\left(  1-\lambda\right)  }-d^{2}\lambda
^{2}\left(  3-2\lambda\right)  ^{2}a^{2}y^{2\left(  2\lambda-1\right)  }\\
&  & -\frac{\gamma^{2}d^{3}\left(  1+d\right)  }{2\lambda\left(
2\lambda-1\right)  }y^{2\lambda}-d\left(  a+a^{\prime}+acd^{4}\right)
\lambda\left(  3-2\lambda\right)  y^{2\left(  2\lambda-1\right)  }\\
&  & -\left(  db+dc+db^{\prime}+bcd^{4}+dc^{\prime}+b+c^{2}d^{4}\right)
\lambda\left(  3-2\lambda\right)  y^{2\lambda}-\\
&  & -\frac{4\lambda d^{2}a^{2}}{\varepsilon}y^{2\left(  2\lambda-1\right)
}-\frac{1024d^{6}\left(  1+d\right)  ^{2}\left(  1+b+c\right)  ^{4}%
}{\varepsilon\rho_{2}^{2}}y^{4\left(  2\lambda-1\right)  }-\frac
{1}{4\varepsilon}\frac{\kappa^{2}}{\lambda^{2}\left(  2\lambda-1\right)  ^{2}%
}y^{4\left(  1-\lambda\right)  }\\
&  & -\mathrm{lower}\text{ }\mathrm{order}\text{ }\mathrm{terms}].
\end{array}
\]
Choose%
\[
\varepsilon=\frac{\left(  \delta-\delta_{0}\right)  ^{2}}{10m\delta^{2}},
\]
we obtain%
\begin{equation}
0\geq-\frac{G}{t_{0}}+\frac{t_{0}}{m\delta^{2}}x^{2}+\frac{3}{4}\rho_{2}%
t_{0}y^{2}-C_{4}t_{0}.\label{64}%
\end{equation}

$(i)\ $If $x\geq%
{\displaystyle\sum\limits_{\alpha}}
\left(  1+\left\vert Y_{\alpha}f\right\vert ^{2}\right)  ^{\lambda}$, then by
the definition
\[
G\left(  x_{0},t_{0}\right)  =t_{0}\left[  \left(  1-\delta_{0}\right)
{\displaystyle\sum\limits_{j}}
\left\vert e_{j}f\right\vert ^{2}+x+%
{\displaystyle\sum\limits_{\alpha}}
\left(  1+\left\vert Y_{\alpha}f\right\vert ^{2}\right)  ^{\lambda}\right]
\]
we have%
\[%
\begin{array}
[c]{cl}
& 0\geq-2t_{0}x+\frac{\left(  t_{0}x\right)  ^{2}}{m\delta^{2}}-C_{4}t_{0}%
^{2}\\
\Longrightarrow & t_{0}x\leq2m\delta^{2}+C_{5}t_{0}\\
\Longrightarrow & G\leq2t_{0}x\leq4m\delta^{2}+2C_{5}t_{0}\\
\Longrightarrow & \left(
{\displaystyle\sum\limits_{j\in I_{d}}}
\left\vert e_{j}f\right\vert ^{2}+%
{\displaystyle\sum\limits_{\alpha\in\Lambda}}
\left(  1+\left\vert Y_{\alpha}f\right\vert ^{2}\right)  ^{\lambda}-\delta
f_{t}\right)  \left(  x_{0},t_{0}\right)  \leq\frac{4d\delta^{2}}{t_{0}}%
+C_{6}.
\end{array}
\]

$(ii)\ $If $x\leq%
{\displaystyle\sum\limits_{\alpha}}
\left(  1+\left\vert Y_{\alpha}f\right\vert ^{2}\right)  ^{\lambda}$, then
\[
0\geq-2C_{7}y^{2\lambda}+\frac{3}{4}\rho_{2}t_{0}y^{2}-C_{4}t_{0}.
\]

$\left(  a\right)  $ If $t_{0}<1$, then \
\[
y^{2}\left(  \frac{3}{4}\rho_{2}t_{0}-2C_{7}y^{2\left(  \lambda-1\right)
}\right)  \leq C_{4}t_{0},
\]
and
\[
\frac{3}{4}\rho_{2}t_{0}\leq\left(  C_{4}+2C_{7}\right)  y^{2\left(
\lambda-1\right)  },
\]
and%
\[
y\leq C_{2}t_{0}^{\frac{1}{2\left(  \lambda-1\right)  }},
\]
and%
\[
t_{0}y^{2\lambda}\leq C_{2}t_{0}^{\frac{2\lambda-1}{\lambda-1}}.
\]

$\left(  b\right)  $ If $t_{0}\geq1$, then%
\[
0\geq-2C_{7}y^{2\lambda}+\frac{3}{4}\rho_{2}t_{0}y^{2}-C_{4}t_{0}%
\]
and
\[
0\geq-2C_{7}t_{0}y^{2\lambda}+\frac{3}{4}\rho_{2}t_{0}y^{2}-C_{4}t_{0}a
\]
and
\[
0\geq-2C_{7}y^{2\lambda}+\frac{3}{4}\rho_{2}y^{2}-C_{4}%
\]
and%
\[
y\leq C_{8}%
\]
and
\[
t_{0}y^{2\lambda}\leq C_{9}t_{0}.
\]

Combining $\left(  a\right)  $ and $\left(  b\right)  $, we have
\[%
\begin{array}
[c]{cl}
& G\leq2t_{0}%
{\displaystyle\sum\limits_{\alpha}}
\left(  1+\left\vert Y_{\alpha}f\right\vert ^{2}\right)  ^{\lambda}\leq
C_{2}^{\prime}t_{0}+C_{3}^{\prime}t_{0}^{\frac{2\lambda-1}{\lambda-1}}\\
\Longrightarrow & \left(
{\displaystyle\sum\limits_{j\in I_{d}}}
\left\vert e_{j}f\right\vert ^{2}+%
{\displaystyle\sum\limits_{\alpha\in\Lambda}}
\left(  1+\left\vert Y_{\alpha}f\right\vert ^{2}\right)  ^{\lambda}-\delta
f_{t}\right)  \left(  x_{0},t_{0}\right)  \leq C_{2}+C_{3}t_{0}^{\frac
{\lambda}{\lambda-1}}.
\end{array}
\]

$(II)\ Case\ II:$ $x\leq0:$

We may assume%
\begin{equation}
\left(  \delta_{0}-1\right)
{\displaystyle\sum\limits_{j\in I_{d}}}
\left\vert e_{j}f\right\vert ^{2}\leq%
{\displaystyle\sum\limits_{\alpha\in\Lambda}}
\left(  1+\left\vert Y_{\alpha}f\right\vert ^{2}\right)  ^{\lambda};\label{62}%
\end{equation}
otherwise,
\[
F\left(  x_{0},t_{0}\right)  \leq0.
\]

By $\left(  1\right)  -\left(  4\right)  $, $\left(  \ref{59}\right)  $
becomes%
\begin{equation}%
\begin{array}
[c]{ccl}%
0 & \geq & -\frac{G}{t_{0}}+\frac{15}{16}\rho_{2}t_{0}\left(
{\displaystyle\sum\limits_{\alpha}}
\left\vert Y_{\alpha}f\right\vert ^{2}\right)  -\frac{2d^{2}}{\lambda\left(
2\lambda-1\right)  }\left(
{\displaystyle\sum\limits_{j}}
\left\vert e_{j}f\right\vert ^{2}\right)  \left(
{\displaystyle\sum\limits_{\alpha}}
\left(  1+\left\vert Y_{\alpha}f\right\vert ^{2}\right)  ^{\frac{1-\lambda}%
{2}}\right)  ^{2}\\
&  & -\frac{4d^{2}\left(  1+d\right)  }{\rho_{2}}t_{0}\left(
{\displaystyle\sum\limits_{j}}
\left\vert e_{j}f\right\vert \right)  ^{2}-d^{2}\lambda^{2}\left(
3-2\lambda\right)  ^{2}a^{2}t_{0}\left(
{\displaystyle\sum\limits_{\beta}}
\left(  1+\left\vert Y_{\beta}f\right\vert ^{2}\right)  ^{\lambda-1}\left\vert
Y_{\beta}f\right\vert \right)  ^{2}\\
&  & -\frac{\gamma^{2}d^{3}\left(  1+d\right)  }{2\lambda\left(
2\lambda-1\right)  }t_{0}\left(
{\displaystyle\sum\limits_{\beta}}
\left(  1+\left\vert Y_{\beta}f\right\vert ^{2}\right)  ^{\lambda-1}\left\vert
Y_{\beta}f\right\vert \right)  ^{2}\left(
{\displaystyle\sum\limits_{\alpha}}
\left(  1+\left\vert Y_{\alpha}f\right\vert ^{2}\right)  ^{\frac{1-\lambda}%
{2}}\right)  ^{2}\\
&  & -d\left(  a+a^{\prime}+acd^{4}\right)  \lambda\left(  3-2\lambda\right)
t_{0}%
{\displaystyle\sum\limits_{j,\beta}}
\left(  1+\left\vert Y_{\beta}f\right\vert ^{2}\right)  ^{\lambda-1}\left\vert
Y_{\beta}f\right\vert \left\vert e_{j}f\right\vert \\
&  & -\left(  db+dc+db^{\prime}+bcd^{5}+dc^{\prime}+b+c^{2}d^{4}\right)
\lambda\left(  3-2\lambda\right)  t_{0}%
{\displaystyle\sum\limits_{\alpha,\beta}}
\left(  1+\left\vert Y_{\alpha}f\right\vert ^{2}\right)  ^{\lambda
-1}\left\vert Y_{\alpha}f\right\vert \left\vert Y_{\beta}f\right\vert \\
&  & -4a\lambda t_{0}\left(
{\displaystyle\sum\limits_{\beta}}
\left(  1+\left\vert Y_{\beta}f\right\vert ^{2}\right)  ^{\lambda-1}\left\vert
Y_{\beta}f\right\vert \right)  \left(
{\displaystyle\sum\limits_{j}}
\left\vert e_{j}f\right\vert \right)  ^{2}\\
&  & -4\left(  1+b+c\right)  \lambda t_{0}%
{\displaystyle\sum\limits_{j,\alpha,\beta}}
\left(  1+\left\vert Y_{\beta}f\right\vert ^{2}\right)  ^{\lambda-1}\left\vert
e_{j}f\right\vert \left\vert Y_{\alpha}f\right\vert \left\vert Y_{\beta
}f\right\vert \\
&  & -\frac{t_{0}\kappa}{\lambda\left(  2\lambda-1\right)  }%
{\displaystyle\sum\limits_{\alpha}}
\left(  1+\left\vert Y_{\alpha}f\right\vert ^{2}\right)  ^{1-\lambda}\cdot%
{\displaystyle\sum\limits_{j}}
\left\vert e_{j}f\right\vert ^{2}-t_{0}\rho_{1}%
{\displaystyle\sum\limits_{j}}
\left\vert e_{j}f\right\vert ^{2}\\
& \geq & -\frac{G}{t_{0}}+\frac{3}{4}\rho_{2}t_{0}y^{2}+t_{0}[\frac{3}{16}%
\rho_{2}y^{2}-\frac{C_{9}}{\delta_{0}-1}y^{2}-C_{10}y^{2\lambda}%
-C_{11}y^{2\left(  2\lambda-1\right)  }-C_{12}y^{2\lambda}\\
&  & -C_{13}y^{3\lambda-1}-C_{14}y^{2\lambda}-C_{15}y^{4\lambda-1}%
-C_{16}y^{3\lambda}-\frac{C_{17}}{\delta_{0}-1}y^{2}-C_{18}y^{2\lambda}].
\end{array}
\label{2016aaa}%
\end{equation}
If we choose
\[
\delta_{0}=1+\frac{8}{\rho_{2}}\left(  C_{9}+C_{17}\right)
>1\text{\ \ \ \textrm{and}\ \ \ }\frac{1}{2}<\lambda<\frac{2}{3},
\]
then we derive the inequality%
\begin{equation}
0\geq-\frac{G}{t_{0}}+\frac{3}{4}\rho_{2}t_{0}y^{2}-C_{19}t_{0}\label{63}%
\end{equation}

for some constant $C_{19}>0$. Utilizing the same deductions as precedes and
$\left(  \ref{63}\right)  $ instead of $\left(  \ref{64}\right)  $, the proof
of this theorem is completed.
\end{proof}

\end{document}